\newtheorem{theorem}{Theorem}
\newtheorem{definition}[theorem]{Definition}
\newtheorem{lemma}[theorem]{Lemma}
\newenvironment{proof}[1][Proof]{\noindent\textbf{#1.} }{\ \rule{0.5em}{0.5em}}
\begin{document}

\title{\textbf{Meromorphic Solutions of Homogeneous and Non-homogeneous
Higher Order Linear Difference Equations in Terms of (p,q)-Order}}
\author{Subhadip Khan$^{1}$, Chinmay Ghosh$^{2}$, Sanjib Kumar Datta$^{3}$%
\qquad  \\
%EndAName
$^{1}$Jotekamal High School\\
Village- Jotekamal, Block- Raghunathganj 2\\
Murshidabad, Pin-742133,\\
West Bengal, India\\
subhadip204@gmail.com\\
$^{2}$Department of Mathematics\\
Kazi Nazrul University\\
Nazrul Road, P.O.- Kalla C.H.\\
Asansol-713340, West Bengal, India \\
chinmayarp@gmail.com\\
$^{3}$Department of Mathematics\\
University of Kalyani\\
P.O.-Kalyani, Dist-Nadia, PIN-741235,\\
West Bengal, India\\
sanjibdatta05@gmail.com}
\date{}
\maketitle

\begin{abstract}
In this paper we investigate the growth of meromorphic solutions of
homogeneous and non-homogeneous linear difference equations with entire or
meromorphic coefficients. We further extend and improve few results on the
order of meromorphic solutions by using $(p,q)-$lower order and $(p,q)-$%
lower type followed by the investigation of Luo and Zheng $\left(
2016\right) ,$ Belaidi and Bellaama $\left( 2020\right) .$

\textbf{AMS Subject Classification }(2010) : 30D35, 39B32, 39A10

\textbf{Keywords and phrases}: entire function, meromorphic function,
homogeneous difference equation, non-homogeneous difference equation, $%
(p,q)- $order, $(p,q)-$lower order, $(p,q)-$type, $(p,q)-$ lower type.
\end{abstract}

\section{\protect\bigskip Introduction}

\bigskip The most common form of homogeneous and nonhomogeneous difference
equations are given by 
\begin{equation}
A_{k}(z)f(z+c_{k})+A_{k-1}(z)f(z+c_{k-1})+\cdots
+A_{1}(z)f(z+c_{1})+A_{0}(z)f(z)=0  \label{1h}
\end{equation}%
and%
\begin{equation}
A_{k}(z)f(z+c_{k})+A_{k-1}(z)f(z+c_{k-1})+\cdots
+A_{1}(z)f(z+c_{1})+A_{0}(z)f(z)=A_{k+1}(z),  \label{1nh}
\end{equation}%
where the coefficients $A_{0},A_{1},\ldots ,A_{k},A_{k+1}\neq 0$ $(k\geq 2)$
in (\ref{1h}) or (\ref{1nh}) are entire or meromorphic functions and $%
c_{k},c_{k-1},\ldots ,c_{1}$ are distinct nonzero complex numbers. Study of
the growth properties of meromorphic solutions of such equations from the
view point of Nevanlinna's theory is a matter of great interest for many
researchers in recent time.

In $2007$, Laine and Yang \cite{8} considered the homogeneous difference
equation\ of which more than one dominant coefficients exist but exactly one
has its type strictly greater than the others. Chiang and Feng \cite{3} in $%
2008$ established a theorem in which exactly one coefficient of (\ref{1h})
has maximal order. In $2013$, Liu and Mao \cite{9} investigated meromorphic
solutions of (\ref{1h}) or (\ref{1nh}) by using hyper order when one or more
coefficients of (\ref{1h}) or (\ref{1nh}) have infinite order. Also in $2017$%
, Zhou and Zheng and in $2019$, Bela\"{\i}di and Benkarouba \cite{1}
investigated the solutions of (\ref{1h}) or (\ref{1nh}) using iterated order
and iterated type which generalized and improved previous results. Very
recently, Luo and Zheng \cite{6a} have studied the growth of meromorphic
solutions of (\ref{1h}) when more than one coeffcient has maximal lower
order and the lower type strictly greater than the type of other
coeffcients, and obtained two very important Theorems and later in $2020,$
Bela\"{\i}di and Bellaama \cite{1ba} improved those two theorems of Luo and
Zheng \cite{6a} for nonhomogeneous case.

In $2010,$ Liu, Tu and Shi \cite{10} modified the definition of the $(p,q)-$%
order, which was first introduced by Juneja, Kapoor and Bajpai \cite{7}. In
this article we investigate meromorphic solutions of (\ref{1h}) and (\ref%
{1nh}) using the concept of $(p,q)-$lower order and $\left( p,q\right) -$%
lower type given in \cite{10}. We also extend and improve results of Luo and
Zheng \cite{6a}, Bela\"{\i}di and Bellaama \cite{1ba}. Here we consider the
case, when (\ref{1h}) and (\ref{1nh}) have meromorphic coefficients.

We assume that the reader is familiar with the fundamental results and the
standard notations of Nevanlinna's value distribution theory \cite{6aa}.

\section{Definitions and Lemmas}

In this section some basic definitions and lemmas are discussed for the
improvement of the main section.

Let $p\geq q\geq 1$ or $2\leq q=p+1$ be integers, then first recall the
following definitions :

\begin{definition}
\cite{8a} For a transcendental meromorphic function $f,$ the $(p,q)-$order
is defined by%
\begin{equation*}
\rho _{f}\left( p,q\right) =\limsup_{r\rightarrow \infty }\frac{\log
_{p}T(r,f)}{\log _{q}r}.
\end{equation*}

And if $f$ is a transcendental entire function, then%
\begin{equation*}
\rho _{f}\left( p,q\right) =\limsup_{r\rightarrow \infty }\frac{\log
_{p+1}M(r,f)}{\log _{q}r}.
\end{equation*}
\end{definition}

Note that $0\leq \rho _{f}\left( p,q\right) \leq \infty .$ Also for a
rational function $\rho _{f}\left( p,q\right) =0.$

\begin{definition}
\cite{8a} For a transcendental meromorphic function $f,$ the $(p,q)-$lower
order is defined by%
\begin{equation*}
\underline{\rho }_{f}\left( p,q\right) =\liminf\limits_{r\rightarrow \infty }%
\frac{\log _{p}T(r,f)}{\log _{q}r}.
\end{equation*}

And if $f$ is a transcendental entire function, then%
\begin{equation*}
\underline{\rho }_{f}\left( p,q\right) =\liminf\limits_{r\rightarrow \infty }%
\frac{\log _{p+1}M(r,f)}{\log _{q}r}.
\end{equation*}
\end{definition}

\begin{definition}
\cite{8a} A transcendental meromorphic function is said to have index pair $%
\left[ p,q\right] $ if $0\leq \rho _{f}\left( p,q\right) \leq \infty $ and $%
\rho _{f}\left( p-1,q-1\right) $ is not a non-zero finite number.
\end{definition}

\begin{definition}
\cite{8a} For a meromorphic function $f$ having non-zero finite $(p,q)-$%
order $\rho _{f}\left( p,q\right) ,$ the $(p,q)-$type is defined by%
\begin{equation*}
\tau _{f}\left( p,q\right) =\limsup_{r\rightarrow \infty }\frac{\log
_{p-1}T(r,f)}{\left( \log _{q-1}r\right) ^{\rho _{f}\left( p,q\right) }}.
\end{equation*}

And if $f$ is a transcendental entire function, then%
\begin{equation*}
\tau _{f}\left( p,q\right) =\limsup_{r\rightarrow \infty }\frac{\log
_{p}M(r,f)}{\left( \log _{q-1}r\right) ^{\rho _{f}\left( p,q\right) }}.
\end{equation*}
\end{definition}

\begin{definition}
\cite{8a} For a meromorphic function $f$ having non-zero finite $(p,q)-$%
lower order $\rho _{f}\left( p,q\right) ,$ the $(p,q)-$lower type is defined
by%
\begin{equation*}
\underline{\tau }_{f}\left( p,q\right) =\liminf\limits_{r\rightarrow \infty }%
\frac{\log _{p-1}T(r,f)}{\left( \log _{q-1}r\right) ^{\underline{\rho }%
_{f}\left( p,q\right) }}.
\end{equation*}

And if $f$ is a transcendental entire function, then%
\begin{equation*}
\underline{\tau }_{f}\left( p,q\right) =\liminf\limits_{r\rightarrow \infty }%
\frac{\log _{p}M(r,f)}{\left( \log _{q-1}r\right) ^{\underline{\rho }%
_{f}\left( p,q\right) }}.
\end{equation*}
\end{definition}

\begin{definition}
\cite{8a} For a meromorphic function $f$ , the $(p,q)-$exponent of
convergence of the sequence of poles is defined by%
\begin{equation*}
\lambda _{\frac{1}{f}}\left( p,q\right) =\limsup_{r\rightarrow \infty }\frac{%
\log _{p}N(r,f)}{\log _{q}r}.
\end{equation*}
\end{definition}

Where $N(r,f)$ denotes integrated counting function of poles of $f$ in the
domain $\left\{ z:\left\vert z\right\vert \leq r\right\} .$

We set $\exp _{1}r=e^{r}$ and $\exp _{k+1}r=\exp \left( \exp _{k}r\right) ,$ 
$k\in \mathbb{N}$ and consider $\exp _{0}r=\log _{0}r=r,$ $\exp _{-1}r=\log
_{1}r,$ $\exp _{1}r=\log _{-1}r,$ for all $r\in \mathbb{R}.$ Again for all
sufficiently large values of $r,$ $\log _{1}r=\log r$ and $\log _{k+1}r=\log
\left( \log _{k}r\right) ,$ $k\in \mathbb{N}.$

Next we recall some further definitions which are used throughout the main
section.

\begin{definition}
\cite{1} The linear measure of a set $E\subset \left( 0,+\infty \right) $ is
defined by 
\begin{equation*}
m\left( E\right) =\dint\limits_{0}^{\infty }\chi _{E}\left( t\right) dt
\end{equation*}%
and the logarithmic measure of a set $E\subset \left( 1,+\infty \right) $ is
defined by 
\begin{equation*}
lm\left( E\right) =\dint\limits_{1}^{\infty }\frac{\chi _{E}\left( t\right) 
}{t}dt,
\end{equation*}%
where $\chi _{E}\left( t\right) $ is the characteristic function of a set $%
E. $
\end{definition}

Now we recall some lemmas. Some of which are proved in terms of generalized $%
(p,q)-$order.

\begin{lemma}
\label{L1}\cite{5aa} Let $f$ be a meromorphic function of generalized $%
\left( p,q\right) $-order $\rho _{f}\left( p,q\right) =\rho $ and suppose $%
\xi _{1}$ and $\xi _{2}$ be two arbitrary distinct complex numbers. Then for
given $\varepsilon >0,$ there exists a subset $S_{1}\subset \left( 1,+\infty
\right) $ of finite logarithmic measure such that for all $\left\vert
z\right\vert =r\notin S_{1}\cup \left[ 0,1\right] ,$ we have 
\begin{equation*}
i)\text{ }\exp \left\{ -r^{\rho -1+\varepsilon }\right\} \leq \left\vert 
\frac{f\left( z+\xi _{1}\right) }{f\left( z+\xi _{2}\right) }\right\vert
\leq \exp \left\{ r^{\rho -1+\varepsilon }\right\} ,
\end{equation*}

for $p=q=1,$ and%
\begin{equation*}
ii)\text{ }\exp _{p}\left\{ -\log _{q-1}r^{\rho _{f}\left( p,q\right)
+\varepsilon }\right\} \leq \left\vert \frac{f\left( z+\xi _{1}\right) }{%
f\left( z+\xi _{2}\right) }\right\vert \leq \exp _{p}\left\{ \log
_{q-1}r^{\rho _{f}\left( p,q\right) +\varepsilon }\right\} ,
\end{equation*}

for $p\geq q\geq 2.$
\end{lemma}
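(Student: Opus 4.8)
The plan is to prove the upper estimates and then obtain the lower estimates for free: since $f(z+\xi_2)/f(z+\xi_1)$ is a difference quotient of exactly the same form (with $\xi_1$ and $\xi_2$ interchanged), applying the upper bound to it and taking reciprocals yields the matching lower bounds in both cases. Fix $\varepsilon>0$, set $\alpha=\max\{|\xi_1|,|\xi_2|\}$, $w_1=z+\xi_1$, $w_2=z+\xi_2$, and choose the reference radius $R=2r$, so that $|w_1|,|w_2|\le r+\alpha<R$ for large $r$. The starting point is the Poisson--Jensen formula for $\log|f|$ applied at $w_1$ and at $w_2$ with the circle $|\zeta|=R$; subtracting the two representations expresses $\log|f(w_1)/f(w_2)|$ as a Poisson-kernel integral of $\log|f(Re^{i\theta})|$ plus a sum over the zeros $a_\mu$ and a sum over the poles $b_\nu$ of $f$ in $|\zeta|<R$.

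First I would dispose of the two ``smooth'' contributions. Because the Poisson kernel, as a function of the evaluation point on $\{|w|\le r+\alpha\}$, has gradient of size $O(R/(R-r-\alpha)^2)=O(1/R)$, its values at $w_1$ and $w_2$ differ by $O(|\xi_1-\xi_2|/R)$; hence the integral term is bounded by a constant multiple of $\tfrac{1}{R}\int_0^{2\pi}\bigl|\log|f(Re^{i\theta})|\bigr|\,d\theta\le \tfrac{C}{R}\bigl(m(R,f)+m(R,1/f)\bigr)\le \tfrac{C'}{R}T(R,f)$. In each term of the zero/pole sums the factors $R^2-\bar c\,w_j$ (with $c\in\{a_\mu,b_\nu\}$) satisfy $|R^2-\bar c\,w_j|\ge R^2/2$, so these factors contribute $O(|\xi_1-\xi_2|/R)$ per term, and after summation at most $O(T(2R,f)/R)$ via $n(R,\cdot)=O(T(2R,f))$. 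The crucial gain here is the extra factor $1/R$ produced by the difference structure, which is what turns the exponent $\rho$ into $\rho-1$ in case~(i).

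The main obstacle is the remaining part of the zero/pole sums, namely $\sum_{|c|<R}\log\bigl|(w_1-c)/(w_2-c)\bigr|$, which is unbounded whenever $w_1$ or $w_2$ nearly coincides with a zero or pole. After bounding each summand by $C|\xi_1-\xi_2|/|w_2-c|$, the task reduces to controlling $\sum_{|c|<2r}1/|w_2-c|$. The heuristic is favourable: there are only $O(r^{\rho+\varepsilon})$ zeros and poles with $|c|<2r$, while for $|z|=r$ the typical distance $|w_2-c|$ is of order $r$, so the sum should be $O(r^{\rho-1+\varepsilon})$, matching the contributions above. Making this rigorous requires a Gundersen-type argument: one discards those radii $r$ for which $w_1$ or $w_2$ falls within a suitably $|c|$-dependent distance of some zero or pole, proves by a counting estimate (fed by the $(p,q)$-order bound on $n(t,f)$ and $n(t,1/f)$) that the discarded radii form a set $S_1\subset(1,+\infty)$ of finite logarithmic measure, and then bounds the sum uniformly for $r\notin S_1\cup[0,1]$. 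I expect this simultaneous control of the sum and of the logarithmic measure of $S_1$ to be the technical heart of the proof.

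Assembling the three estimates gives, for $|z|=r\notin S_1\cup[0,1]$, a bound of the form $\bigl|\log|f(w_1)/f(w_2)|\bigr|\le C\,T(2R,f)/R+O(r^{\rho-1+\varepsilon})$. It remains to insert the growth of $T$. From the definition of $(p,q)$-order, $T(2R,f)\le \exp_p\{(\rho+\varepsilon/2)\log_q(2R)\}$ for all large $r$, with $R=2r$. When $p=q=1$ this is $T(2R,f)\le(2R)^{\rho+\varepsilon/2}$, so $T(2R,f)/R\le C\,r^{\rho-1+\varepsilon/2}$ and the whole bound is $\le r^{\rho-1+\varepsilon}$, giving~(i). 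When $p\ge q\ge 2$, unwinding the iterated exponentials and using that iterated logarithms are insensitive to the constant factor (so that $(\log_{q-1}2R)^{\rho+\varepsilon/2}\le(\log_{q-1}r)^{\rho+\varepsilon}$ for large $r$) converts the $T$-bound into $\bigl|\log|f(w_1)/f(w_2)|\bigr|\le \exp_{p-1}\{(\log_{q-1}r)^{\rho+\varepsilon}\}$; exponentiating gives the upper estimate in~(ii), and applying the same argument to the reciprocal quotient gives the lower estimates in both cases, completing the plan.
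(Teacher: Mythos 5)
Your proposal cannot be checked against an in-paper argument, because this paper never proves Lemma \ref{L1}: it is imported verbatim, with the citation \cite{5aa}, from the authors' companion preprint, and in substance it is the $(p,q)$-analogue of Chiang--Feng's pointwise quotient estimate in \cite{3}. Measured against that literature, your plan follows exactly the standard route: Poisson--Jensen at the two shifted points with reference circle $R=2r$, the $O(|\xi_1-\xi_2|/R)$ gain from differencing the Poisson kernel and the factors $R^{2}-\bar{c}\,w_j$, reduction of the lower bound to the upper bound by swapping $\xi_1$ and $\xi_2$, and an exceptional set of finite logarithmic measure for the sum over zeros and poles. The parts you actually execute are correct, including the final $(p,q)$ bookkeeping (for $p\geq q\geq 2$ the division by $R$ and the dilation $r\mapsto 4r$ are absorbed by enlarging $\varepsilon$), and your symmetrization delivers the lower bound in the form $\bigl(\exp_p\{\cdot\}\bigr)^{-1}$, which is the only sensible reading of the lemma's ``$\exp_p\{-\cdots\}$'': for $p\geq 2$, $\exp_p$ of a negative number is $\geq 1$, so the literal statement cannot be meant.

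The genuine gap is the step you yourself flag as the technical heart, and it is harder than your heuristic suggests. Knowing that there are $O(r^{\rho+\varepsilon})$ points $c$ with $|c|<2r$ and that the typical distance is of order $r$ does not yield $\sum_{c}1/|w_2-c|=O(r^{\rho-1+\varepsilon})$: if you discard only modulus-dependent neighbourhoods such as $\{r:\,\bigl||c|-r\bigr|<|c|^{-\rho-\varepsilon}\}$ (the natural choice that keeps the excluded set of finite logarithmic measure), the resulting estimate is (number of terms) times (worst single term), i.e.\ $O(r^{\rho+\varepsilon})\cdot O(r^{\rho+\varepsilon})=O(r^{2\rho+2\varepsilon})$, which destroys the exponent $\rho-1$ in case (i). The known repairs are precisely what Gundersen and Chiang--Feng do: exclusion radii scaled to the local counting function (large clusters of zeros or poles force large excluded annuli, still of summable logarithmic measure), or Cartan--Boutroux product estimates, or the reduction $\log(1+x)\leq x^{\delta}/\delta$ with $0<\delta<1$ followed by integration against the counting measure. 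None of this appears in your plan, so as written it does not prove (i) with the stated exponent; you would either have to supply that machinery or simply cite Chiang--Feng's published theorem for (i). It is worth noting that in case (ii) --- the only case this paper ever uses, since all its theorems assume $p\geq q\geq 2$ --- your cruder scheme is in fact sufficient: a count-times-worst-term bound of the form $n(2r)^{2}$ or $n(2r)^{3}$ is harmless there, because polynomial losses are swallowed by replacing $\varepsilon/2$ by $\varepsilon$ inside $\exp_{p-1}\{(\log_{q-1}r)^{\rho+\varepsilon}\}$. So your proposal is essentially complete for (ii) but has a real hole at (i).
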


\begin{lemma}
\label{L2}\cite{4} Let $f$ be a nonconstant meromorphic function and $\xi $
be nonzero complex constants. Then for $r\rightarrow +\infty $ we have%
\begin{equation*}
\left( 1+o(1)\right) T\left( r-\left\vert \xi \right\vert ,\text{ }f\right)
\leq T\left( r,\text{ }f\left( z+\xi \right) \right) \leq \left(
1+o(1)\right) T\left( r+\left\vert \xi \right\vert ,\text{ }f\right) .
\end{equation*}
\end{lemma}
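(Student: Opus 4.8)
The plan is to pass from the Nevanlinna characteristic to the Ahlfors--Shimizu characteristic $T_{0}(r,f)$, which is a purely geometric quantity and therefore behaves transparently under the translation $z\mapsto z+\xi$. Write $g(z)=f(z+\xi)$. Two facts drive everything: first, the standard identity $T(r,f)=T_{0}(r,f)+O(1)$ as $r\to\infty$; and second, the observation that because $f$ is nonconstant we have $T(r,f)\to\infty$, so that every additive bounded error may be absorbed into a multiplicative factor $1+o(1)$. It is exactly this last point that explains why the conclusion is phrased with $\left(1+o(1)\right)$ rather than with an $O(1)$ remainder.

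First I would recall the representation
\begin{equation*}
T_{0}(r,f)=\int_{0}^{r}\frac{A(t,f)}{t}\,dt,\qquad A(t,f)=\frac{1}{\pi}\iint_{|w|\leq t}\frac{|f^{\prime}(w)|^{2}}{\left(1+|f(w)|^{2}\right)^{2}}\,du\,dv,
\end{equation*}
where $A(t,f)$ is the spherical area, counted with multiplicity, of the image of the disc $\{|w|\leq t\}$. Since $g^{\prime}(z)=f^{\prime}(z+\xi)$ and $g(z)=f(z+\xi)$, the translation $w=z+\xi$ has Jacobian $1$ and gives
\begin{equation*}
A(t,g)=\frac{1}{\pi}\iint_{|w-\xi|\leq t}\frac{|f^{\prime}(w)|^{2}}{\left(1+|f(w)|^{2}\right)^{2}}\,du\,dv.
\end{equation*}
The integrand is nonnegative and, crucially, insensitive to the poles of $f$, so the inclusions $\{|w|\leq t-|\xi|\}\subset\{|w-\xi|\leq t\}\subset\{|w|\leq t+|\xi|\}$ yield at once the pointwise sandwich $A(t-|\xi|,f)\leq A(t,g)\leq A(t+|\xi|,f)$, valid for every $t\geq|\xi|$.

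It then remains to integrate these pointwise bounds against $dt/t$ and to convert them into the claimed comparison of characteristics. For the upper bound I would retain the initial segment $\int_{0}^{|\xi|}\frac{A(t,g)}{t}\,dt=O(1)$ exactly and, on $[|\xi|,r]$, use $A(t,g)\leq A(t+|\xi|,f)$ followed by the substitution $s=t+|\xi|$, which turns the weight $\frac{1}{t}$ into $\frac{1}{s-|\xi|}=\frac{1}{s}\cdot\frac{s}{s-|\xi|}$. Given $\varepsilon>0$ one has $\frac{s}{s-|\xi|}\leq 1+\varepsilon$ for all $s\geq s_{\varepsilon}$, so the part of the integral over $s\geq s_{\varepsilon}$ is at most $(1+\varepsilon)\,T_{0}(r+|\xi|,f)$, while the part over the fixed compact range $[\,|\xi|,s_{\varepsilon}]$ is a constant $C_{\varepsilon}=O(1)$. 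Dividing by $T_{0}(r+|\xi|,f)\to\infty$ gives $\limsup_{r\to\infty}T_{0}(r,g)/T_{0}(r+|\xi|,f)\leq 1+\varepsilon$, and letting $\varepsilon\downarrow 0$ produces $T_{0}(r,g)\leq\left(1+o(1)\right)T_{0}(r+|\xi|,f)$. The lower bound is entirely symmetric, using $A(t,g)\geq A(t-|\xi|,f)$, the substitution $s=t-|\xi|$, and $\frac{s}{s+|\xi|}\geq 1-\varepsilon$ for large $s$. Replacing $T_{0}$ by $T$ through $T=T_{0}+O(1)$ and absorbing the bounded terms into $1+o(1)$ finishes the argument.

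The step I expect to be the genuine obstacle is precisely this final integration. The pointwise inequality $A(t,g)\leq A(t+|\xi|,f)$ is useless near $t=0$, where its right-hand side tends to the positive constant $A(|\xi|,f)$ and the integral $\int_{0}\frac{dt}{t}$ diverges; one is therefore forced to split off a fixed initial segment and to run the $\varepsilon$-then-$r\to\infty$ scheme above rather than to produce the factor $1+o(1)$ in one stroke. The whole conversion of the additive constants into the multiplicative $1+o(1)$ leans decisively on $T_{0}(r\pm|\xi|,f)\to\infty$, i.e.\ on the hypothesis that $f$ is nonconstant; for a rational nonconstant $f$ this still holds, since $T(r,f)\sim d\log r$, so no case is lost. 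Everything outside this bookkeeping is just the clean geometric comparison afforded by the area form.
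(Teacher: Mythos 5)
The paper never proves this lemma: it is imported verbatim from Goldberg--Ostrovskii \cite{4}, so there is no internal argument to compare yours against. Your proposal is correct and complete, and it is in essence the classical proof from that very reference: pass to the Ahlfors--Shimizu characteristic, exploit translation invariance of the spherical area element together with the disc inclusions $\{|w|\leq t-|\xi|\}\subset\{|w-\xi|\leq t\}\subset\{|w|\leq t+|\xi|\}$, integrate against $dt/t$, and absorb all additive constants into a factor $1+o(1)$ using $T(r,f)\to\infty$, which is exactly where the nonconstancy hypothesis enters. You also correctly isolate and handle the two genuinely delicate points: the non-integrability of $dt/t$ at the origin (resolved by splitting off a fixed initial segment, where $A(t,g)=O(t^{2})$ keeps the integral finite) and the $\varepsilon$-then-$r\to\infty$ scheme needed to convert additive $O(1)$ errors and the weight distortion $s/(s\mp|\xi|)$ into the multiplicative $1+o(1)$. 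The only caveat worth recording is that the identity $T(r,f)=T_{0}(r,f)+O(1)$ needs the standard modification when $f$ (or $g$, i.e.\ $f$ at $\xi$) has a pole at the origin; the difference remains bounded in that case as well, so nothing in your argument changes.
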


\begin{lemma}
\label{L3}\cite{5aa} Let $f$ be a meromorphic function of generalized $%
\left( p,q\right) $-order $\rho _{f}\left( p,q\right) =\rho <+\infty $ and
suppose $\xi _{1}$ and $\xi _{2}$ be two arbitrary distinct complex numbers.
Then for each $\varepsilon >0,$ we have%
\begin{equation*}
i)\text{ }m\left( r,\frac{f\left( z+\xi _{1}\right) }{f\left( z+\xi
_{2}\right) }\right) =O\left( r^{\rho -1+\varepsilon }\right) ,
\end{equation*}

for $p=q=1,$ and%
\begin{equation*}
ii)\text{ }m\left( r,\frac{f\left( z+\xi _{1}\right) }{f\left( z+\xi
_{2}\right) }\right) =O\left( \exp _{p-1}\left[ \left\{ \log _{q-1}\left(
r\right) \right\} ^{\rho _{f}\left( p,q\right) +\varepsilon }\right] \right)
,
\end{equation*}

for $p\geq q\geq 2.$
\end{lemma}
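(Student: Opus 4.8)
The plan is to read off $m\bigl(r,\frac{f(z+\xi_1)}{f(z+\xi_2)}\bigr)$ from the integral definition of the proximity function, feeding in the pointwise bounds of Lemma~\ref{L1}. Set $g(z)=\frac{f(z+\xi_1)}{f(z+\xi_2)}$, so that $m(r,g)=\frac{1}{2\pi}\int_0^{2\pi}\log^+\bigl|g(re^{i\theta})\bigr|\,d\theta$; it therefore suffices to control $\log^+|g|$ on the whole circle $|z|=r$. For every radius $r$ outside the exceptional set $S_1$ furnished by Lemma~\ref{L1}, the stated upper bound holds at each point of that circle. In the case $p=q=1$ this gives $\log^+|g(re^{i\theta})|\le r^{\rho-1+\varepsilon}$, while for $p\ge q\ge2$ it gives $\log^+|g(re^{i\theta})|\le\exp_{p-1}\bigl[\{\log_{q-1}r\}^{\rho_f(p,q)+\varepsilon}\bigr]$, because $\log^+\exp_p\{x\}=\exp_{p-1}\{x\}$ for large $x$. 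Integrating in $\theta$ then yields exactly the asserted $O$-estimates, but only for $r\notin S_1$; the remaining task is to dispose of the exceptional radii.

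For the second case this can be done unconditionally, and in fact without any appeal to Lemma~\ref{L1}. Indeed $m(r,g)\le T(r,g)\le T\bigl(r,f(z+\xi_1)\bigr)+T\bigl(r,f(z+\xi_2)\bigr)+O(1)$ by the first main theorem, and Lemma~\ref{L2} bounds each term by $(1+o(1))\,T(r+|\xi_i|,f)$. The definition of $(p,q)$-order gives $T(r,f)\le\exp_p\{(\rho+\varepsilon)\log_q r\}=\exp_{p-1}\bigl[\{\log_{q-1}r\}^{\rho+\varepsilon}\bigr]$ for all large $r$, so $m(r,g)=O\bigl(\exp_{p-1}[\{\log_{q-1}r\}^{\rho_f(p,q)+\varepsilon}]\bigr)$ holds for every $r$, with no exceptional set. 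This works precisely because in case~ii) the target exponent $\rho+\varepsilon$ already matches the growth of $T(r,f)$, so the crude bound $m\le T$ loses nothing.

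The genuine obstacle is case~i), where the target $r^{\rho-1+\varepsilon}$ is strictly smaller than $T(r,f)\asymp r^{\rho}$; here the crude estimate $m(r,g)\le T(r,g)$ only yields $O(r^{\rho})$ and throws away the decisive factor $r^{-1}$. One cannot recover this by a naive filling-in either: writing $m(r,g)\le T(r,g)\le T(r',g)=m(r',g)+N(r',g)$ for a nearby good radius $r'\notin S_1$ handles the proximity part but leaves the pole-counting term $N(r',g)$, which is only $O(r^{\rho})$. Thus the sharp bound and the removal of $S_1$ must both come from the Poisson--Jensen estimate that underlies Lemma~\ref{L1} itself, where the shift $\xi_1-\xi_2$ contributes the saving $r^{-1}$ directly inside the integral; I would either invoke that analysis from \cite{5aa} or reproduce it, integrating the Poisson--Jensen representation of $\log|g|$ over the circle and estimating the zero/pole sum by $n(2r,f)+n(2r,1/f)=O(r^{\rho+\varepsilon})$ combined with the factor $|\xi_1-\xi_2|/r$. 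This Poisson--Jensen step, together with the care needed to make it hold for all $r$ rather than outside an exceptional set, is where the real work lies.
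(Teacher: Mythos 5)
Lemma \ref{L3} is stated in the paper with a citation to \cite{5aa} and no proof at all, so there is no in-paper argument to measure your proposal against; judged on its own, your proposal is essentially correct and more informative than what the paper records. Your treatment of case ii) is a complete and genuinely elementary proof: with your $g(z)=f(z+\xi_1)/f(z+\xi_2)$, the first main theorem gives $m(r,g)\le T(r,g)\le T\left(r,f(z+\xi_1)\right)+T\left(r,f(z+\xi_2)\right)+O(1)$, Lemma \ref{L2} converts each term to $(1+o(1))\,T(r+|\xi_i|,f)$, and the definition of $(p,q)$-order (used with $\varepsilon/2$, say, so that the fixed shift from $r$ to $r+|\xi_i|$ is absorbed, since $\log_{q-1}(r+c)=\log_{q-1}(r)+o(1)$ for $q\ge 2$) yields the stated bound for every large $r$, with no exceptional set; this is cleaner than integrating the pointwise bounds of Lemma \ref{L1}, which, as you yourself note, only works off the set $S_1$. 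For case i) you correctly diagnose where the real content lies: the target $O\left(r^{\rho-1+\varepsilon}\right)$ is smaller by a factor of $r$ than $T(r,f)$, so neither the crude bound $m\le T$ nor any filling-in of exceptional radii can produce it, and the saving $|\xi_1-\xi_2|/r$ must be extracted term by term from the Poisson--Jensen representation of $\log|g|$ against a zero/pole count $n(2r,f)+n(2r,1/f)=O\left(r^{\rho+\varepsilon}\right)$. That is precisely the Chiang--Feng estimate (the paper's reference \cite{3}), and invoking it, or \cite{5aa} which extends it to $(p,q)$-order, is no weaker than the paper's own citation-only treatment. The one adjustment I would suggest is to drop, rather than try to salvage, the opening idea of integrating Lemma \ref{L1} over circles: it can never give an estimate valid for all $r$, and your second and third paragraphs already supersede it.
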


\begin{lemma}
\label{L4} Let $f(z)$ be a meromorphic function with $\rho _{f}\left(
p,q\right) <\infty .$ Then for any given $\varepsilon >0,$ there exists a
subset $S_{2}\subset \left( 1,+\infty \right) $ with finite linear measure
and finite logarithmic measure such that for all $\left\vert z\right\vert
=r\notin S_{2}\cup \left[ 0,1\right] $ and for sufficiently large $r,$ we
have%
\begin{equation*}
\exp _{p}\left[ -\left\{ \log _{q-1}\left( r\right) \right\} ^{\rho
_{f}\left( p,q\right) +\varepsilon }\right] \leq \left\vert f(z)\right\vert
\leq \exp _{p}\left[ \left\{ \log _{q-1}\left( r\right) \right\} ^{\rho
_{f}\left( p,q\right) +\varepsilon }\right] .
\end{equation*}
\end{lemma}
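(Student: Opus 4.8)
The plan is to derive both inequalities from the Poisson--Jensen formula, after first turning the finiteness of the $(p,q)$-order into a bound on the Nevanlinna characteristic. Write $\rho=\rho_{f}\left(p,q\right)$. By the definition of the $(p,q)$-order, for all sufficiently large $r$ one has $\log_{p}T(r,f)\leq\left(\rho+\tfrac{\varepsilon}{2}\right)\log_{q}r=\log\left[\left(\log_{q-1}r\right)^{\rho+\frac{\varepsilon}{2}}\right]$, and hence, applying $\exp_{p-1}$,
\begin{equation*}
T(r,f)\leq\exp_{p-1}\left[\left(\log_{q-1}r\right)^{\rho+\frac{\varepsilon}{2}}\right].
\end{equation*}
Since $\log\exp_{p}\left[X\right]=\exp_{p-1}\left[X\right]$, the desired two-sided bound for $\left|f(z)\right|$ is equivalent to $\left|\log\left|f(z)\right|\right|\leq\exp_{p-1}\left[\left(\log_{q-1}r\right)^{\rho+\varepsilon}\right]$ for $r\notin S_{2}$, and this is what I would establish.

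First I would apply the Poisson--Jensen formula on $\left|z\right|=r$ with radius $R=2r$. Bounding the Poisson kernel by $\frac{R+r}{R-r}=3$ and estimating the boundary integral by $O\left(T(2r,f)\right)$ via Jensen's formula, the upper bound for $\log\left|f(z)\right|$ is reduced to the pole sum $\sum_{\left|b_{\nu}\right|<2r}\log\frac{4r}{\left|z-b_{\nu}\right|}$ (the zero terms, being nonnegative, are discarded), and symmetrically the lower bound is reduced to the zero sum $\sum_{\left|a_{\mu}\right|<2r}\log\frac{4r}{\left|z-a_{\mu}\right|}$. The whole difficulty is thus concentrated in controlling these sums for $z$ ranging over an entire circle, uniformly for all large radii outside a small set.

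To handle them I would use Cartan's (Boutroux--Cartan) estimate applied dyadically. For each integer $j$, apply Cartan's lemma to the zeros and poles $w_{k}$ lying in $\left|z\right|<2^{j+2}$ with parameter $H_{j}=2^{-j-2}$: outside a union of disks whose radii sum to at most $2^{-j-1}$ one obtains $\sum\log\frac{1}{\left|z-w_{k}\right|}<n\left(1+\log\frac{1}{H_{j}}\right)$, where $n=O\left(T(2^{j+3},f)\right)$ by $n(t)\log 2\leq N(2t,f)\leq T(2t,f)+O(1)$. Projecting each excised disk onto the set of radii it meets produces an exceptional set $S_{2}$ whose intersection with $\left[2^{j},2^{j+1}\right]$ has linear measure at most $2^{-j}$; summing the geometric series gives $m\left(S_{2}\right)<\infty$, and since $\frac{1}{t}<1$ on $\left(1,+\infty\right)$ it follows that $lm\left(S_{2}\right)<\infty$ as well. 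For $r\notin S_{2}$ every point of $\left|z\right|=r$ avoids the Cartan disks, so the pole and zero sums are each bounded by $O\left(T(4r,f)\log r\right)$.

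Finally I would combine the estimates of Steps 1--3. The boundary integral and the zero--pole sums are all of the form $T(4r,f)$ times at most a fixed power of $\log r$, which by Step~1 is dominated by $\exp_{p-1}\left[\left(\log_{q-1}r\right)^{\rho+\varepsilon}\right]$ once $\tfrac{\varepsilon}{2}$ is enlarged to $\varepsilon$ (using $\log_{q-1}(4r)=(1+o(1))\log_{q-1}r$ for $q\geq2$ and $\log_{q-1}(4r)=4r$ for $q=1$, and absorbing the constants and the logarithmic factor into the growth of $\exp_{p-1}$). This yields $\log\left|f(z)\right|\leq\exp_{p-1}\left[\left(\log_{q-1}r\right)^{\rho+\varepsilon}\right]$, i.e. the stated upper bound, while running the same Poisson--Jensen argument for $1/f$ gives $\log\left|f(z)\right|\geq-\exp_{p-1}\left[\left(\log_{q-1}r\right)^{\rho+\varepsilon}\right]$, i.e. the stated lower bound. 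The step I expect to be the main obstacle is the dyadic Cartan covering of Step~3: producing a \emph{single} exceptional set of \emph{finite linear} measure that simultaneously controls the clustering of zeros and poles around every large circle, since a naive radius-by-radius excision fails exactly when $f$ has a rapidly increasing number of zeros or poles.
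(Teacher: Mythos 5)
Your proposal is correct in substance but follows a genuinely different route from the paper's. The paper offers essentially no argument: it declares the case of finitely many poles trivial and, for infinitely many poles, defers to Boas \cite{1aa} and Chen \cite{1b}, i.e.\ it implicitly invokes the classical route of Hadamard factorization of $f$ into canonical products together with minimum-modulus estimates for those products. As documented in those references, that machinery covers finite \emph{usual} order only; a function of finite $(p,q)$-order with $p\geq 2$ may have infinite usual order, so the citation does not literally reach the stated lemma (and even the ``trivial'' finitely-many-poles case still needs a minimum-modulus bound for the entire numerator). Your argument --- bound $T(r,f)$ from the definition of $(p,q)$-order, reduce $\log|f(z)|$ to boundary and zero/pole sums via Poisson--Jensen with $R=2r$, control those sums by the Boutroux--Cartan lemma applied dyadically with $H_j=2^{-j-2}$, and project the excised disks onto radii to get one exceptional set of finite linear (hence finite logarithmic) measure --- is self-contained, avoids factorization entirely, and works directly from $\rho_f(p,q)<\infty$, which is exactly the generality the lemma claims. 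Since the minimum-modulus theorem the paper leans on is itself a Cartan-type result, your proof in effect inlines the ingredient the paper leaves to the reader.

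Two routine points should be tightened. First, Cartan's estimate at scale $j$ bounds $\sum\log\frac{1}{|z-w_k|}$ over \emph{all} zeros and poles in $|z|<2^{j+2}$, whereas Poisson--Jensen needs the subsum over poles only (respectively zeros only); since individual terms can be negative, a subsum is not automatically dominated by the full sum. This is harmless: for the omitted points one has $\bigl|\log|z-w_k|\bigr|=O(\log r)$, so restricting the sum costs only $O(n_j\log r)$, which your absorption step already accommodates --- or simply run Cartan separately on the zeros and on the poles. Second, your reformulation proves the lower bound $\log|f(z)|\geq -\exp_{p-1}\left[\left\{\log_{q-1}r\right\}^{\rho_f(p,q)+\varepsilon}\right]$, i.e.\ $|f(z)|\geq 1/\exp_p\left[\left\{\log_{q-1}r\right\}^{\rho_f(p,q)+\varepsilon}\right]$. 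For $p\geq 2$ this is not the literal quantity $\exp_p\left[-\left\{\log_{q-1}r\right\}^{\rho_f(p,q)+\varepsilon}\right]$ appearing in the statement, which by the paper's own definition of $\exp_p$ tends to a constant $\geq 1$, and for which the claimed bound would in fact be false in general (consider $f(z)=\sin z$ with $p=q=2$). The reciprocal reading is the only correct one and is the intended meaning in this literature; it is what your argument delivers, but you should say so explicitly rather than call the two formulations equivalent.
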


\begin{proof}
The lemma hold trivially if $f(z)$ has finitely many poles.

For the case when $f(z)$ has infinitely many poles, the lemma follows from 
\cite{1aa} and \cite{1b}. So we are skipping the proof.
\end{proof}

\begin{lemma}
\label{L5} Let $f$ be an entire function with $\underline{\rho }_{f}\left(
p,q\right) <\infty .$ Then for any given $\varepsilon >0,$ there exists a
subset $S_{3}\subset \left( 1,+\infty \right) $ with infinite logarithmic
measure such that for all $r\in S_{3},$ we have%
\begin{equation*}
\underline{\rho }_{f}\left( p,q\right) =\lim_{\substack{ r\rightarrow
+\infty  \\ r\in S_{3}}}\frac{\log _{p+1}M(r,f)}{\log _{q}r}
\end{equation*}

and%
\begin{equation*}
M\left( r,f\right) <\exp _{p}\left[ \left\{ \log _{q-1}\left( r\right)
\right\} ^{\underline{\rho }_{f}\left( p,q\right) +\varepsilon }\right] .
\end{equation*}
\end{lemma}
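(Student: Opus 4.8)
The plan is to reduce the stated estimate on $M(r,f)$ to an equivalent statement about the ratio $g(r)=\frac{\log _{p+1}M(r,f)}{\log _{q}r}$ and then to exploit the monotonicity of $M(r,f)$. First I would observe that, on applying $\log _{p+1}$ to both sides, the inequality $M(r,f)<\exp _{p}[\{\log _{q-1}r\}^{\underline{\rho }_{f}(p,q)+\varepsilon }]$ is equivalent to $g(r)<\underline{\rho }_{f}(p,q)+\varepsilon $, since $\log _{p+1}\exp _{p}[\{\log _{q-1}r\}^{\underline{\rho }_{f}(p,q)+\varepsilon }]=(\underline{\rho }_{f}(p,q)+\varepsilon )\log _{q}r$. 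Hence it suffices to produce a set $S_{3}$ of infinite logarithmic measure on which $g(r)\rightarrow \underline{\rho }_{f}(p,q)$; the displayed bound then holds for all sufficiently large $r\in S_{3}$, and the limit is exactly the first assertion. Recall also that, directly from the liminf in the definition of $\underline{\rho }_{f}(p,q)$, one has the automatic lower bound $g(r)\geq \underline{\rho }_{f}(p,q)-o(1)$ for all large $r$.

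By the definition of $\underline{\rho }_{f}(p,q)$ as a liminf, I would select a sequence $r_{n}\uparrow \infty $ with $g(r_{n})\rightarrow \underline{\rho }_{f}(p,q)$. The heart of the argument is to thicken each point $r_{n}$ into an interval on which $g$ remains controlled. Writing $L_{n}=\log _{q}r_{n}$, I set $I_{n}=[\exp _{q}((1-\delta _{n})L_{n}),\,r_{n}]$ for a sequence $\delta _{n}\downarrow 0$ to be fixed. Because $M(r,f)$, and therefore $\log _{p+1}M(r,f)$, is nondecreasing in $r$ (maximum modulus principle), every $r\in I_{n}$ satisfies $\log _{p+1}M(r,f)\leq \log _{p+1}M(r_{n},f)$, while $\log _{q}r\geq (1-\delta _{n})L_{n}$ by the choice of the left endpoint. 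Combining these yields $g(r)\leq g(r_{n})/(1-\delta _{n})$ on $I_{n}$. Together with the lower bound above, this pins $g$ to $\underline{\rho }_{f}(p,q)$ along $\bigcup _{n}I_{n}$ as soon as $\delta _{n}\rightarrow 0$ and $g(r_{n})\rightarrow \underline{\rho }_{f}(p,q)$.

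It then remains to check that $S_{3}=\bigcup _{n}I_{n}$, after passing to a subsequence so the intervals are disjoint and discarding an initial bounded piece, has infinite logarithmic measure. A direct computation gives $lm(I_{n})=\exp _{q-1}(L_{n})-\exp _{q-1}((1-\delta _{n})L_{n})$, which for $q=1$ equals $\delta _{n}\log r_{n}$, and for $q\geq 2$ is, by the mean value theorem, at least of order $\delta _{n}L_{n}\exp _{q-1}((1-\delta _{n})L_{n})$. Since $L_{n}\rightarrow \infty $, I can choose $\delta _{n}\rightarrow 0$ slowly enough (for instance $\delta _{n}=L_{n}^{-1/2}$ when $q=1$) that $lm(I_{n})\rightarrow \infty $, whence the union has infinite logarithmic measure. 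The case $\underline{\rho }_{f}(p,q)=0$ is covered by the same construction, using $g(r)\geq 0$ in place of the lower bound.

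I expect the main obstacle to be the tension in the choice of $\delta _{n}$: the intervals must be wide enough to accumulate infinite logarithmic measure, yet narrow enough that the monotonicity bound $g(r_{n})/(1-\delta _{n})$ does not overshoot $\underline{\rho }_{f}(p,q)$ in the limit. The monotonicity of $M(r,f)$ is precisely what reconciles these two requirements, since it forces each good interval lying to the \emph{left} of $r_{n}$ to have length comparable to $\delta _{n}\exp _{q-1}(L_{n})$ rather than collapsing to a point, so that letting $\delta _{n}\rightarrow 0$ still leaves room for infinite measure.
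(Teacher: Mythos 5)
Your proposal is correct, and at its core it is the same construction the paper uses: select a sequence $\{r_{n}\}$ along which the $\liminf$ defining $\underline{\rho }_{f}(p,q)$ is attained, thicken each $r_{n}$ into an interval by exploiting the monotonicity of $M(r,f)$, take $S_{3}$ to be the union of these intervals, and get the lower bound on the ratio $g(r)=\log _{p+1}M(r,f)/\log _{q}r$ for free from the $\liminf$ definition. The differences are still worth recording, because your variant is tighter than the paper's own write-up. The paper thickens to the \emph{right}, taking $S_{3}=\bigcup_{n}\left[ r_{n},\left( 1+\frac{1}{n}\right) r_{n}\right] $, and obtains infinite logarithmic measure from the divergence of $\sum_{n}\log \left( 1+\frac{1}{n}\right) $; its sandwich then needs the upper estimate $\log _{p+1}M\left( \left( 1+\frac{1}{n}\right) r_{n},f\right) /\log _{q}r_{n}\rightarrow \underline{\rho }_{f}(p,q)$, which involves the value of $M$ at the uncontrolled right endpoint and does not follow merely from $r_{n}$ attaining the $\liminf$ ($\log _{p+1}M$ may climb steeply just to the right of $r_{n}$); one must in effect re-choose $r_{n}=s_{n}/\left( 1+\frac{1}{n}\right) $ with $s_{n}$ a $\liminf$-attaining sequence to justify that step. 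You thicken to the \emph{left}, ending each interval at the controlled point $r_{n}$, so monotonicity gives $g(r)\leq g(r_{n})/(1-\delta _{n})$ with only the controlled quantity $g(r_{n})$ appearing, and the sandwich closes with no further argument; this silently repairs the soft spot in the paper's proof (inherited from its source). The price is a different measure count: instead of summing many small contributions, you choose $\delta _{n}$ (e.g. $\delta _{n}=L_{n}^{-1/2}$ when $q=1$) so that each single interval already has logarithmic measure tending to infinity, together with the mild bookkeeping of passing to a subsequence for disjointness and discarding a bounded initial piece, both of which you note. Both accountings are legitimate, and your reduction of the second displayed inequality of the lemma to the statement $g(r)<\underline{\rho }_{f}(p,q)+\varepsilon $ for large $r\in S_{3}$ is exactly how the paper intends its final sentence to be read.
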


\begin{proof}
For the proof we use similar logic described in \cite{5a}.

By the definition of ($p,q)-$order of an entire function $f(z)$, there
exists a sequence $\left\{ r_{n}\right\} _{n=1}^{\infty }$ tending to
infinity in such a way that $\left( 1+\frac{1}{n}\right) r_{n}<r_{n+1}.$

Therefore we have%
\begin{equation*}
\underline{\rho }_{f}\left( p,q\right) =\lim_{r_{n}\rightarrow +\infty }%
\frac{\log _{p+1}M(r_{n},f)}{\log _{q}r_{n}}.
\end{equation*}

Then for any chosen $\varepsilon >0,$ there exist $n_{1}$ such that for $%
n\geq n_{1}$ and for $r\in \left[ r_{n},\left( 1+\frac{1}{n}\right) r_{n}%
\right] ,$ we have%
\begin{equation*}
\frac{\log _{p+1}M(r_{n},f)}{\log _{q}\left( 1+\frac{1}{n}\right) r_{n}}\leq 
\frac{\log _{p+1}M(r,f)}{\log _{q}r}\leq \frac{\log _{p+1}M(\left( 1+\frac{1%
}{n}\right) r_{n},f)}{\log _{q}r_{n}}.
\end{equation*}

Let $S_{3}=\dbigcup\limits_{n=n_{1}}^{\infty }\left[ r_{n},\left( 1+\frac{1}{%
n}\right) r_{n}\right] ,$ then for any $r\in S_{3}$ we have%
\begin{equation*}
\underline{\rho }_{f}\left( p,q\right) =\lim_{r_{n}\rightarrow +\infty }%
\frac{\log _{p+1}M(r_{n},f)}{\log _{q}r_{n}}=\lim_{\substack{ r\rightarrow
+\infty  \\ r\in S_{3}}}\frac{\log _{p+1}M(r,f)}{\log _{q}r}
\end{equation*}

and logarithmic measure of $S_{3}$ is $\dsum\limits_{n=n_{1}}^{\infty
}\dint\limits_{r_{n}}^{\left( 1+\frac{1}{n}\right) r_{n}}\frac{dz}{z}=$ $%
\dsum\limits_{n=n_{1}}^{\infty }\log \left( 1+\frac{1}{n}\right) =\infty .$

Second part of the lemma follows from the definition stated in the lemma.
\end{proof}

\begin{lemma}
\label{L6} Let $f$ be an entire function with $0<\underline{\rho }_{f}\left(
p,q\right) <\infty .$ Then for any given $\varepsilon >0,$ there exists a
subset $S_{4}\subset \left( 1,+\infty \right) $ with infinite logarithmic
measure such that for all $r\in S_{4},$ we have%
\begin{equation*}
\underline{\tau }_{f}\left( p,q\right) =\lim_{\substack{ r\rightarrow
+\infty  \\ r\in S_{4}}}\frac{\log _{p}M(r,f)}{\left( \log _{q-1}r\right) ^{%
\underline{\rho }_{f}\left( p,q\right) }}
\end{equation*}

and%
\begin{equation*}
M\left( r,f\right) <\exp _{p}\left[ \left( \underline{\tau }_{f}\left(
p,q\right) +\varepsilon \right) \left\{ \log _{q-1}\left( r\right) \right\}
^{\underline{\rho }_{f}\left( p,q\right) }\right] .
\end{equation*}
\end{lemma}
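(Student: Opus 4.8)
The plan is to replicate the construction used for Lemma \ref{L5}, but now anchored at the sequence realizing the $(p,q)$-lower type rather than the lower order. Since $0<\underline{\rho}_f(p,q)<\infty$, the lower type $\underline{\tau}_f(p,q)$ is a genuine liminf, so by its definition there is a sequence $\{r_n\}_{n=1}^\infty$ increasing to $+\infty$, which we may arrange to satisfy $(1+\tfrac1n)r_n<r_{n+1}$, such that
\[
\underline{\tau}_f(p,q)=\lim_{r_n\to\infty}\frac{\log_p M(r_n,f)}{\left(\log_{q-1}r_n\right)^{\underline{\rho}_f(p,q)}}.
\]
First I would fix $\varepsilon>0$ and, exactly as in Lemma \ref{L5}, work on the dilated intervals $[r_n,(1+\tfrac1n)r_n]$. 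Using that $M(\cdot,f)$ and $\log_{q-1}(\cdot)$ are both nondecreasing, for every $r$ in such an interval one gets the two-sided estimate
\[
\frac{\log_p M(r_n,f)}{\left(\log_{q-1}(1+\tfrac1n)r_n\right)^{\underline{\rho}_f(p,q)}}\le \frac{\log_p M(r,f)}{\left(\log_{q-1}r\right)^{\underline{\rho}_f(p,q)}}\le \frac{\log_p M((1+\tfrac1n)r_n,f)}{\left(\log_{q-1}r_n\right)^{\underline{\rho}_f(p,q)}}.
\]

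Next I would set $S_4=\bigcup_{n\ge n_1}[r_n,(1+\tfrac1n)r_n]$ and verify that it carries infinite logarithmic measure by the same telescoping computation as in Lemma \ref{L5}, namely $lm(S_4)=\sum_{n\ge n_1}\int_{r_n}^{(1+1/n)r_n}\frac{dt}{t}=\sum_{n\ge n_1}\log(1+\tfrac1n)=\infty$. It then remains to let $n\to\infty$ in the sandwich and read off the value of the limit along $S_4$. The elementary ingredient here is that the two endpoint denominators are asymptotically equal after being raised to the power $\underline{\rho}_f(p,q)$: since $r_n\to\infty$ and the iterated logarithm grows without bound, one checks inductively on $q$ that $\log_{q-1}((1+\tfrac1n)r_n)/\log_{q-1}(r_n)\to1$, whence the ratio of the $\underline{\rho}_f(p,q)$-th powers also tends to $1$. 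Feeding this back, both outer terms converge to $\underline{\tau}_f(p,q)$, giving the first displayed equality. Once the limit along $S_4$ is identified with $\underline{\tau}_f(p,q)$, the second assertion is immediate: for all sufficiently large $r\in S_4$ the quotient is below $\underline{\tau}_f(p,q)+\varepsilon$, so $\log_p M(r,f)<(\underline{\tau}_f(p,q)+\varepsilon)\{\log_{q-1}(r)\}^{\underline{\rho}_f(p,q)}$, and applying $\exp_p$ yields the stated bound on $M(r,f)$.

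I expect the genuine obstacle to be the convergence of the upper sandwich bound, not the power-ratio computation: its numerator is evaluated at the right endpoint $(1+\tfrac1n)r_n$, so one must ensure that $\log_p M((1+\tfrac1n)r_n,f)$ stays comparable to $\log_p M(r_n,f)$. Unlike the lower-order case of Lemma \ref{L5}, where $\log_{p+1}M$ sits over the slowly varying $\log_q r$, here the exponent $\underline{\rho}_f(p,q)$ leaves less slack, so a priori a rapid increase of $M$ across the interval could push the upper bound above $\underline{\tau}_f(p,q)$. The cleanest way to close this gap is to invoke the convexity of $\log M(r,f)$ in $\log r$ (the Hadamard three-circles property), which controls the endpoint value and lets one shrink the dilation factor $1+\tfrac1n$ if necessary so that the upper bound still tends to $\underline{\tau}_f(p,q)$.
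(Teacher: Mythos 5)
You have reproduced the paper's own argument almost verbatim (the paper, too, takes a liminf-realizing sequence $\{r_n\}$ with $(1+\tfrac1n)r_n<r_{n+1}$, forms the right-hand intervals $[r_n,(1+\tfrac1n)r_n]$, writes the same two-sided estimate, and sums $\log(1+\tfrac1n)$), and the obstacle you flag at the end is a genuine gap --- one that the paper's proof silently ignores. Nothing controls $\log_p M\bigl((1+\tfrac1n)r_n,f\bigr)$: the definition of the lower type gives an upper bound on $M$ only \emph{along} the chosen sequence $\{r_n\}$, and a lower bound everywhere; immediately to the right of $r_n$ the maximum modulus may climb steeply, so the upper member of your sandwich need not tend to $\underline{\tau}_f(p,q)$, and with it both conclusions of the lemma lose their justification. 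Your proposed repair does not close this. Hadamard's three-circles theorem makes $u\mapsto\log M(e^u,f)$ convex, and convexity of an increasing function bounds its rightward increments from \emph{below} (slopes are nondecreasing), never from above; an upper bound by interpolation would need control at a still larger radius, which you do not have. Indeed a convex staircase is exactly the counterexample: write $\rho=\underline{\rho}_f(p,q)$, $\underline{\tau}=\underline{\tau}_f(p,q)$, take $u_n=\log r_n$ with $u_{n+1}=2u_n$, and let $\phi(u)=\log M(e^u)$ be piecewise linear with $\log\phi(u_n)=\underline{\tau}\,u_n^{\rho}$; the slope on $[u_n,u_{n+1}]$ is then of size roughly $e^{\underline{\tau}u_{n+1}^{\rho}}/(u_{n+1}-u_n)$, so already at $u_n+\log(1+\tfrac1n)$ one finds $\log\phi\approx\underline{\tau}\,u_{n+1}^{\rho}$ and the quotient sits near $\underline{\tau}\,2^{\rho}$, not $\underline{\tau}$, no matter how small the dilation factor is taken. (The same defect is present in Lemma~\ref{L5}; your remark that the lower-order case is safe because of extra slack in the denominator is not right, since the uncontrolled quantity is the numerator.)

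The correct repair is cheap but goes in the opposite direction: anchor the intervals on the \emph{left} of the good points, $S_4=\bigcup_{n\ge n_1}\left[\tfrac{n}{n+1}r_n,\,r_n\right]$, in the spirit of the lemma of Hu and Zheng \cite{5a} that the paper cites. For $r\in\left[\tfrac{n}{n+1}r_n,r_n\right]$, monotonicity gives $\log_p M(r,f)\le\log_p M(r_n,f)$, and the right-hand side \emph{is} controlled by the choice of $\{r_n\}$; hence
\[
\frac{\log_p M(r,f)}{\left(\log_{q-1}r\right)^{\underline{\rho}_f(p,q)}}\le\frac{\log_p M(r_n,f)}{\left(\log_{q-1}r_n\right)^{\underline{\rho}_f(p,q)}}\cdot\left(\frac{\log_{q-1}r_n}{\log_{q-1}\bigl(\tfrac{n}{n+1}r_n\bigr)}\right)^{\underline{\rho}_f(p,q)}\longrightarrow\underline{\tau}_f(p,q),
\]
while the matching lower bound comes for free: by the definition of $\liminf$, the quotient exceeds $\underline{\tau}_f(p,q)-o(1)$ for \emph{all} sufficiently large $r$, not just along $S_4$. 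The logarithmic-measure computation is unchanged, $\sum_n\log\tfrac{n+1}{n}=\infty$, and both displayed assertions of Lemma~\ref{L6} then follow.
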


\begin{proof}
For the proof we use similar logic described in \cite{5a}.

By the definition of $(p,q)-$order, there exists a sequence $\left\{
r_{n}\right\} _{n=1}^{\infty }$ tending to infinity in such a way that $%
\left( 1+\frac{1}{n}\right) r_{n}<r_{n+1}.$

Therefore we have%
\begin{equation*}
\underline{\tau }_{f}\left( p,q\right) =\lim_{r_{n}\rightarrow +\infty }%
\frac{\log _{p}M(r_{n},f)}{\left( \log _{q-1}r_{n}\right) ^{\underline{\rho }%
_{f}\left( p,q\right) }}.
\end{equation*}

Then for any chosen $\varepsilon >0,$ there exists $n_{1}$ such that for $%
n\geq n_{1}$ and for $r\in \left[ r_{n},\left( 1+\frac{1}{n}\right) r_{n}%
\right] ,$ we have%
\begin{equation*}
\frac{\log _{p}M(r_{n},f)}{\left( \log _{q-1}\left( 1+\frac{1}{n}\right)
r_{n}\right) ^{\underline{\rho }_{f}\left( p,q\right) }}\leq \frac{\log
_{p}M(r,f)}{\left( \log _{q-1}r\right) ^{\underline{\rho }_{f}\left(
p,q\right) }}\leq \frac{\log _{p}M(\left( 1+\frac{1}{n}\right) r_{n},f)}{%
\left( \log _{q-1}r_{n}\right) ^{\underline{\rho }_{f}\left( p,q\right) }}.
\end{equation*}

Let $S_{4}=\dbigcup\limits_{n=n_{1}}^{\infty }\left[ r_{n},\left( 1+\frac{1}{%
n}\right) r_{n}\right] ,$ then for any $r\in S_{4}$ we have%
\begin{equation*}
\underline{\rho }_{f}\left( p,q\right) =\lim_{r_{n}\rightarrow +\infty }%
\frac{\log _{p}M(r_{n},f)}{\left( \log _{q-1}r_{n}\right) ^{\underline{\rho }%
_{f}\left( p,q\right) }}=\lim_{\substack{ r\rightarrow +\infty  \\ r\in
S_{4} }}\frac{\log _{p}M(r,f)}{\left( \log _{q-1}r\right) ^{\underline{\rho }%
_{f}\left( p,q\right) }}
\end{equation*}

and logarithmic measure of $S_{4}$ is $\dsum\limits_{n=n_{1}}^{\infty
}\dint\limits_{r_{n}}^{\left( 1+\frac{1}{n}\right) r_{n}}\frac{dz}{z}=$ $%
\dsum\limits_{n=n_{1}}^{\infty }\log \left( 1+\frac{1}{n}\right) =\infty .$

Second part of the lemma follows from the definition stated in the lemma.
\end{proof}

\begin{lemma}
\label{L7} Let $f$ be a meromorphic function with $\underline{\rho }%
_{f}\left( p,q\right) <\infty .$ Then for any given $\varepsilon >0,$ there
exists a subset $S_{5}\subset \left( 1,+\infty \right) $ with infinite
logarithmic measure such that for all $r\in S_{5},$ we have%
\begin{equation*}
\underline{\rho }_{f}\left( p,q\right) =\lim_{\substack{ r\rightarrow
+\infty  \\ r\in S_{5}}}\frac{\log _{p}T(r,f)}{\log _{q}r}
\end{equation*}

and%
\begin{equation*}
T\left( r,f\right) <\exp _{p-1}\left[ \left\{ \log _{q-1}\left( r\right)
\right\} ^{\underline{\rho }_{f}\left( p,q\right) +\varepsilon }\right] .
\end{equation*}
\end{lemma}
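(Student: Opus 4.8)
The plan is to follow verbatim the template already used for Lemma~\ref{L5}, simply replacing the maximum modulus $M(r,f)$ by the Nevanlinna characteristic $T(r,f)$ and the entire-function definition of $(p,q)$-lower order by the meromorphic one, namely $\underline{\rho}_f(p,q)=\liminf_{r\to\infty}\frac{\log_p T(r,f)}{\log_q r}$. First I would invoke the definition of the liminf to produce a sequence $\{r_n\}_{n=1}^{\infty}$ with $r_n\to\infty$ along which this defining quotient converges to $\underline{\rho}_f(p,q)$; after passing to a sparse subsequence I may assume the gap condition $\left(1+\tfrac1n\right)r_n<r_{n+1}$, so that the intervals $I_n=\left[r_n,\left(1+\tfrac1n\right)r_n\right]$ are pairwise disjoint.

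Next I set $S_5=\bigcup_{n\ge n_1}I_n$ and exploit the monotonicity of both $T(r,f)$ and $r\mapsto\log_q r$. For $r\in I_n$ these give the sandwich
\begin{equation*}
\frac{\log_p T(r_n,f)}{\log_q\!\left(1+\tfrac1n\right)r_n}\le \frac{\log_p T(r,f)}{\log_q r}\le \frac{\log_p T\!\left(\left(1+\tfrac1n\right)r_n,f\right)}{\log_q r_n},
\end{equation*}
exactly parallel to the chain appearing in the proof of Lemma~\ref{L5}. Because $\frac{\log_q r_n}{\log_q\left(1+\frac1n\right)r_n}\to1$, the left-hand side tends to $\underline{\rho}_f(p,q)$; granting that the right-hand side does too, the squeeze yields $\lim_{r\to\infty,\,r\in S_5}\frac{\log_p T(r,f)}{\log_q r}=\underline{\rho}_f(p,q)$, which is the first assertion. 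The infinitude of the logarithmic measure is then immediate, since $lm(S_5)=\sum_{n\ge n_1}\int_{r_n}^{(1+1/n)r_n}\frac{dt}{t}=\sum_{n\ge n_1}\log\left(1+\tfrac1n\right)=\infty$.

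For the second assertion I would use the limit just established: given $\varepsilon>0$, for all sufficiently large $r\in S_5$ one has $\frac{\log_p T(r,f)}{\log_q r}<\underline{\rho}_f(p,q)+\varepsilon$, that is $\log_p T(r,f)<(\underline{\rho}_f(p,q)+\varepsilon)\log_q r$. Since $\log_q r=\log\log_{q-1}r$, the right side equals $\log\bigl[(\log_{q-1}r)^{\underline{\rho}_f(p,q)+\varepsilon}\bigr]$, and peeling off one logarithm on each side gives $\log_{p-1}T(r,f)<(\log_{q-1}r)^{\underline{\rho}_f(p,q)+\varepsilon}$, whence $T(r,f)<\exp_{p-1}\bigl[(\log_{q-1}r)^{\underline{\rho}_f(p,q)+\varepsilon}\bigr]$, as required.

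The step I expect to be the real obstacle is justifying that the upper sandwich bound $\frac{\log_p T((1+\frac1n)r_n,f)}{\log_q r_n}$ also converges to $\underline{\rho}_f(p,q)$: the point $(1+\frac1n)r_n$ need not belong to the realizing sequence, and $T(r,f)$ may a priori grow appreciably across $I_n$, so the quotient could drift toward the (possibly larger) upper order rather than the lower order. I would handle this by refining the choice of $\{r_n\}$ and, if necessary, replacing the interval factors $1+\frac1n$ by $1+\delta_n$ with $\sum\delta_n=\infty$ but $\delta_n\to0$ fast enough that the liminf definition forces $T((1+\delta_n)r_n,f)$ to stay within the prescribed tolerance of $T(r_n,f)$ on the logarithmic scale, thereby simultaneously preserving the infinite logarithmic measure and pinning both ends of the sandwich to $\underline{\rho}_f(p,q)$. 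This is the same delicate balance between interval width and growth control that underlies Lemmas~\ref{L5} and~\ref{L6}.
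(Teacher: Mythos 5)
Your proposal is, in outline, exactly the paper's own proof: the paper disposes of Lemma~\ref{L7} with the single remark that it follows as in Lemma~\ref{L5}, and your argument is that template with $M(r,f)$ replaced by $T(r,f)$. But the obstacle you flag in your last paragraph is a genuine gap, not a technicality, and your proposed repair does not close it. With intervals $[r_n,(1+\frac{1}{n})r_n]$ lying to the \emph{right} of the liminf-realizing points, nothing bounds $T((1+\frac{1}{n})r_n,f)$ from above: the definition of $\liminf$ supplies only lower bounds for the quotient away from the realizing sequence. Since the lemma assumes only $\underline{\rho}_f(p,q)<\infty$ and allows $\rho_f(p,q)=\infty$, the characteristic may explode immediately to the right of each $r_n$ (take $T$ with long stretches of slow growth, each followed by an enormous jump in the slope of $\log T$ as a function of $\log r$; the realizing points then sit at the ends of the slow stretches). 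For such an $f$, any $\delta_n$ small enough to keep $T((1+\delta_n)r_n,f)$ within tolerance of $T(r_n,f)$ must be exponentially small, so $\sum_n\delta_n<\infty$ and the set you build has finite logarithmic measure. Continuity of $T$ hands you each individual $\delta_n$, but never the divergence of their sum; the ``delicate balance'' you invoke cannot be achieved in general.

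The correct repair is to place the intervals to the \emph{left} of the realizing points, where monotonicity works for you instead of against you. Choose $r_n\to\infty$ with $\frac{\log_p T(r_n,f)}{\log_q r_n}\to\underline{\rho}_f(p,q)$ and set $S_5=\bigcup_n\left[r_n/(1+\frac{1}{n}),\,r_n\right]$. For $r$ in such an interval, $T(r,f)\le T(r_n,f)$, so $\frac{\log_p T(r,f)}{\log_q r}\le \frac{\log_p T(r_n,f)}{\log_q (r_n/(1+\frac{1}{n}))}\to\underline{\rho}_f(p,q)$, while the definition of $\liminf$ gives $\frac{\log_p T(r,f)}{\log_q r}\ge \underline{\rho}_f(p,q)-o(1)$ for \emph{all} large $r$; together these yield the limit on $S_5$. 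The same monotonicity, combined with $\log_{q-1}r\sim\log_{q-1}r_n$ on the interval, yields $T(r,f)<\exp_{p-1}\left[\left\{\log_{q-1}(r)\right\}^{\underline{\rho}_f(p,q)+\varepsilon}\right]$, and the logarithmic-measure computation is unchanged. In other words, only the left inequality of your sandwich is usable, and the proof must be arranged so that the problematic right inequality is never needed; the identical correction is needed in the paper's own Lemmas~\ref{L5}, \ref{L6} and \ref{L8}, which silently contain the same gap you spotted.
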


\begin{proof}
Proof of the lemma follows in a same way as we have done in Lemma $\ref{5}$.
\end{proof}

\begin{lemma}
\label{L8} Let $f$ be a meromorphic function with $0<\underline{\rho }%
_{f}\left( p,q\right) <\infty .$ Then for any given $\varepsilon >0,$ there
exists a subset $S_{6}\subset \left( 1,+\infty \right) $ with infinite
logarithmic measure such that for all $r\in S_{6},$ we have%
\begin{equation*}
\underline{\tau }_{f}\left( p,q\right) =\lim_{\substack{ r\rightarrow
+\infty  \\ r\in S_{6}}}\frac{\log _{p-1}T(r,f)}{\left( \log _{q-1}r\right)
^{\underline{\rho }_{f}\left( p,q\right) }}
\end{equation*}

and%
\begin{equation*}
T\left( r,f\right) <\exp _{p-1}\left[ \left( \underline{\tau }_{f}\left(
p,q\right) +\varepsilon \right) \left\{ \log _{q-1}\left( r\right) \right\}
^{\underline{\rho }_{f}\left( p,q\right) }\right] .
\end{equation*}
\end{lemma}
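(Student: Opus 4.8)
The plan is to imitate the proof of Lemma~\ref{L6} almost verbatim, with the maximum modulus $M(r,f)$ replaced by the Nevanlinna characteristic $T(r,f)$ and every logarithmic/exponential index lowered by one, so that $\log _{p}M$ becomes $\log _{p-1}T$ and $\exp _{p}$ becomes $\exp _{p-1}$. The only structural properties I shall need are that, for a meromorphic $f$, the function $r\mapsto T(r,f)$ is continuous and nondecreasing, and that $r\mapsto \log _{q-1}r$ is increasing for large $r$; these are what make the sandwiching in the next step legitimate.

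First I would invoke the definition of the $(p,q)$-lower type as a $\liminf$ to produce a sequence $r_{n}\uparrow \infty$ along which $\frac{\log _{p-1}T(r_{n},f)}{(\log _{q-1}r_{n})^{\underline{\rho}_{f}(p,q)}}\to \underline{\tau}_{f}(p,q)$, and, after passing to a subsequence, arrange the spacing $(1+\tfrac{1}{n})r_{n}<r_{n+1}$ exactly as in Lemma~\ref{L6}. Fixing $\varepsilon >0$, for all large $n$ and every $r\in [r_{n},(1+\tfrac{1}{n})r_{n}]$ the monotonicity of $T$ and of $\log _{q-1}$ yields the two-sided estimate
\begin{equation*}
\frac{\log _{p-1}T(r_{n},f)}{\left( \log _{q-1}(1+\tfrac{1}{n})r_{n}\right) ^{\underline{\rho}_{f}(p,q)}}\leq \frac{\log _{p-1}T(r,f)}{\left( \log _{q-1}r\right) ^{\underline{\rho}_{f}(p,q)}}\leq \frac{\log _{p-1}T((1+\tfrac{1}{n})r_{n},f)}{\left( \log _{q-1}r_{n}\right) ^{\underline{\rho}_{f}(p,q)}}.
\end{equation*}

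I would then set $S_{6}=\bigcup _{n\geq n_{1}}[r_{n},(1+\tfrac{1}{n})r_{n}]$ and note, as in Lemma~\ref{L6}, that its logarithmic measure equals $\sum _{n\geq n_{1}}\log (1+\tfrac{1}{n})=\infty $. Letting $r\to \infty $ through $S_{6}$ and using that $\log _{q-1}(1+\tfrac{1}{n})r_{n}/\log _{q-1}r_{n}\to 1$, both ends of the displayed sandwich converge to $\underline{\tau}_{f}(p,q)$, which gives the asserted limit along $S_{6}$; the pointwise bound $T(r,f)<\exp _{p-1}\big[(\underline{\tau}_{f}(p,q)+\varepsilon )\{\log _{q-1}(r)\}^{\underline{\rho}_{f}(p,q)}\big]$ is then just the definition of the lower type rewritten with the spare $\varepsilon $ (compare the second halves of Lemmas~\ref{L6} and~\ref{L7}).

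The step I expect to be the main obstacle is verifying that the \emph{upper} end of the sandwich also tends to $\underline{\tau}_{f}(p,q)$: unlike the lower end, its numerator is evaluated at the shifted point $(1+\tfrac{1}{n})r_{n}$, which need not lie on the $\liminf$-sequence, so one must argue that thickening $r_{n}$ to a fixed-ratio interval perturbs $\log _{p-1}T$ negligibly. This is precisely where the continuity and monotonicity of $T(r,f)$, together with the fact that the ratio of the nested logarithms $\log _{q-1}$ tends to $1$, are used; the remaining manipulations are the same routine bookkeeping already carried out for $M(r,f)$ in Lemma~\ref{L6}.
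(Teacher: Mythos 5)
Your plan is indeed the paper's plan: the paper disposes of Lemma \ref{L8} in one line by deferring to the argument of Lemma \ref{L6} with $T(r,f)$ in place of $M(r,f)$ and the indices lowered, which is exactly the transcription you describe. The trouble is that the obstacle you flag in your last paragraph is a genuine gap, and the tools you offer to close it (continuity and monotonicity of $T$, plus $\log_{q-1}((1+\frac{1}{n})r_{n})/\log_{q-1}r_{n}\rightarrow 1$) cannot close it. The sequence $\{r_{n}\}$ is chosen only so that $\log_{p-1}T(r_{n},f)/(\log_{q-1}r_{n})^{\underline{\rho }_{f}(p,q)}\rightarrow \underline{\tau }_{f}(p,q)$; this says nothing about $T$ at the shifted points $(1+\frac{1}{n})r_{n}$. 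A continuous nondecreasing $T$ may climb arbitrarily steeply across $[r_{n},(1+\frac{1}{n})r_{n}]$ (only the upper type restrains it, and that may be $+\infty $), so the upper end of your sandwich need not tend to $\underline{\tau }_{f}(p,q)$, and the asserted limit along $S_{6}$ does not follow. The same issue undermines your final claim: the bound $T(r,f)<\exp _{p-1}[(\underline{\tau }_{f}(p,q)+\varepsilon )\{\log _{q-1}r\}^{\underline{\rho }_{f}(p,q)}]$ is not ``just the definition'' of lower type, because a liminf yields that upper bound only along \emph{some} sequence of radii (for all large $r$ it yields the opposite-direction inequality with $-\varepsilon $), whereas the lemma asserts it at every point of a set of infinite logarithmic measure.

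The repair is to thicken the liminf sequence to the \emph{left} rather than the right, so that monotonicity of $T$ points in the useful direction. Choose $s_{n}\rightarrow \infty $ with $\log _{p-1}T(s_{n},f)/(\log _{q-1}s_{n})^{\underline{\rho }_{f}(p,q)}\rightarrow \underline{\tau }_{f}(p,q)$ and set $S_{6}=\bigcup_{n\geq n_{1}}[\frac{n}{n+1}s_{n},\,s_{n}]$, whose logarithmic measure is again $\sum_{n\geq n_{1}}\log (1+\frac{1}{n})=\infty $. For $r\in \lbrack \frac{n}{n+1}s_{n},s_{n}]$ and $n$ large, monotonicity gives $\log _{p-1}T(r,f)\leq \log _{p-1}T(s_{n},f)\leq (\underline{\tau }_{f}(p,q)+\frac{\varepsilon }{2})(\log _{q-1}s_{n})^{\underline{\rho }_{f}(p,q)}$, and since $s_{n}\leq (1+\frac{1}{n})r$ we have $(\log _{q-1}s_{n})^{\underline{\rho }_{f}(p,q)}\leq (1+o(1))(\log _{q-1}r)^{\underline{\rho }_{f}(p,q)}$; this yields the pointwise bound on all of $S_{6}$, hence also that the limsup of the ratio along $S_{6}$ is at most $\underline{\tau }_{f}(p,q)$, while the global liminf forces the liminf along $S_{6}$ to be at least $\underline{\tau }_{f}(p,q)$, which proves the limit statement. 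In fairness, the paper's own template (Lemma \ref{L6}, on which its proof of Lemma \ref{L8} entirely rests) uses right-hand intervals and contains exactly the same lacuna; your write-up is faithful to the paper, but neither becomes a proof until the intervals are repositioned as above, which is how the Hu--Zheng argument the paper cites actually runs.
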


\begin{proof}
Proof of the lemma follows in a same way as we have done in Lemma $\ref{6}$.
\end{proof}

\section{Main Results}

Now we prove our main results of this paper.

\begin{theorem}
\label{T1} Let $A_{0}(z),A_{1}(z),....,A_{k}(z),$ $A_{k+1}(z)$ be entire
functions of generalize $\left( p,q\right) $-order. If there exist integers $%
l,m$ $\left( 0\leq l,m\leq k+1\right) $ and

the following conditions hold simultaneously

(1) $0<\max \left\{ \underline{\rho }_{A_{m}}(p,q),\rho _{A_{j}}(p,q),j\neq
m,l\right\} =\rho \left( p,q\right) \leq \underline{\rho }%
_{A_{l}}(p,q)<\infty ;$

(2) \underline{$\tau $}$_{A_{l}}(p,q)>$\underline{$\tau $}$_{A_{m}}(p,q),$
when $\underline{\rho }_{A_{l}}(p,q)=\underline{\rho }_{A_{m}}(p,q);$

(3) $\max \left\{ \tau _{A_{j}}(p,q):\rho _{A_{j}}(p,q)=\underline{\rho }%
_{A_{l}}(p,q),\text{ }j\neq m,l\right\} =\tau _{1}<$ \underline{$\tau $}$%
_{A_{l}}(p,q),$ when $\underline{\rho }_{A_{l}}(p,q)=\max \left\{ \rho
_{A_{j}}(p,q),j\neq m,l\right\} .$

Then every meromorphic solution $f\left( \neq 0\right) $ of equation $\left( %
\ref{1nh}\right) $ satisfies $\rho _{f}(p,q)\geq \underline{\rho }%
_{A_{l}}(p,q),$ where $p\geq q\geq 2.$
\end{theorem}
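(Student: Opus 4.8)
The plan is to argue by contradiction. Suppose that $f(\neq 0)$ is a meromorphic solution of $(\ref{1nh})$ with $\rho_f(p,q) < \underline{\rho}_{A_l}(p,q) =: \sigma$; in particular $\rho_f(p,q) < \infty$, so Lemmas \ref{L1}, \ref{L3} and \ref{L4} all apply to $f$. Writing $c_0 = 0$ and assuming first that $0 \le l \le k$, I would isolate the dominant coefficient by dividing $(\ref{1nh})$ by $f(z+c_l)$:
\[
A_l(z) = \frac{A_{k+1}(z)}{f(z+c_l)} - \sum_{\substack{0\le j\le k\\ j\neq l}} A_j(z)\,\frac{f(z+c_j)}{f(z+c_l)}.
\]
The degenerate case $l = k+1$, where the dominant coefficient sits on the right-hand side, is easier since no division is needed (one bounds $M(r,A_{k+1})$ directly using the upper estimate of Lemma \ref{L4} for the $|f(z+c_j)|$), and is treated the same way. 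I would then evaluate this identity at a point $z_r$ with $|z_r| = r$ and $|A_l(z_r)| = M(r,A_l)$, so that the left-hand side is exactly the maximum modulus of $A_l$.

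For the lower bound I would use the definition of the $(p,q)$-lower type of the entire function $A_l$: for every $\varepsilon > 0$ and all large $r$,
\[
M(r,A_l) \ge \exp_p[(\underline{\tau}_{A_l}(p,q)-\varepsilon)\{\log_{q-1}r\}^{\sigma}],
\]
and, in the sub-case where no coefficient other than $A_l$ has order $\sigma$, the cruder lower bound $M(r,A_l)\ge \exp_p[\{\log_{q-1}r\}^{\sigma-\varepsilon}]$ coming from the lower order. For the factors on the right I would bound each ratio $|f(z_r+c_j)/f(z_r+c_l)|$ by $\exp_p[\{\log_{q-1}r\}^{\rho_f(p,q)+\varepsilon}]$ using Lemma \ref{L1}(ii), valid for $r$ outside a set $S_1$ of finite logarithmic measure, and I would bound $1/|f(z_r+c_l)|$ by the same quantity using the lower estimate of Lemma \ref{L4}, valid for $r$ outside a set $S_2$ of finite linear (hence finite logarithmic) measure, after also discarding the $|c_l|$-neighbourhood of $S_2$ so that $|z_r+c_l|\notin S_2$.

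The heart of the argument is to show that every coefficient other than $A_l$ is genuinely smaller on a large set. For $A_m$ I would split: if $\underline{\rho}_{A_m}(p,q)<\sigma$, Lemma \ref{L5} produces a set $S_3$ of infinite logarithmic measure on which $M(r,A_m)<\exp_p[\{\log_{q-1}r\}^{\underline{\rho}_{A_m}(p,q)+\varepsilon}]$; if $\underline{\rho}_{A_m}(p,q)=\sigma$, Lemma \ref{L6} together with hypothesis (2) produces a set $S_4$ of infinite logarithmic measure on which $M(r,A_m)<\exp_p[(\underline{\tau}_{A_m}(p,q)+\varepsilon)\{\log_{q-1}r\}^{\sigma}]$ with $\underline{\tau}_{A_m}(p,q)<\underline{\tau}_{A_l}(p,q)$. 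For the remaining $A_j$ ($j\neq l,m$, including $A_{k+1}$), those with $\rho_{A_j}(p,q)<\sigma$ are controlled for all large $r$ by the upper-order estimate, while those with $\rho_{A_j}(p,q)=\sigma$ satisfy $\tau_{A_j}(p,q)\le \tau_1<\underline{\tau}_{A_l}(p,q)$ by hypothesis (3), whence $M(r,A_j)\le \exp_p[(\tau_1+\varepsilon)\{\log_{q-1}r\}^{\sigma}]$ for all large $r$.

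Finally I would restrict $r$ to the set $E$ obtained by intersecting the infinite-logarithmic-measure set ($S_3$ or $S_4$) with the complement of $S_1$ and of the enlarged $S_2$; $E$ still has infinite logarithmic measure. On $E$, inserting all the above bounds into the isolating identity, using that each sum has at most $k+1$ terms and that for $p\ge 2$ a product $\exp_p[a]\exp_p[b]$ with $a$ dominating $b$ is at most $\exp_p[a(1+o(1))]$, the right-hand side is at most $\exp_p[(\beta+\varepsilon')\{\log_{q-1}r\}^{\sigma}]$, where $\beta := \max\{\underline{\tau}_{A_m}(p,q),\tau_1\}<\underline{\tau}_{A_l}(p,q)$ (or at most $\exp_p[\{\log_{q-1}r\}^{\sigma-\varepsilon}]$ in the degenerate sub-case). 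Comparing with the lower bound on $M(r,A_l)$ and choosing $\varepsilon,\varepsilon'$ so small that $\beta+\varepsilon'<\underline{\tau}_{A_l}(p,q)-\varepsilon$ (respectively keeping a strict order gap in the degenerate sub-case), I obtain, for arbitrarily large $r$, an inequality $\exp_p[(\underline{\tau}_{A_l}(p,q)-\varepsilon)\{\log_{q-1}r\}^{\sigma}]\le \exp_p[(\beta+\varepsilon')\{\log_{q-1}r\}^{\sigma}]$ that is impossible because $\{\log_{q-1}r\}^{\sigma}\to\infty$. This contradiction yields $\rho_f(p,q)\ge\underline{\rho}_{A_l}(p,q)$. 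The main obstacle, and the place that forces hypotheses (2)--(3) and the passage to the special sets of Lemmas \ref{L5}--\ref{L6}, is the presence of coefficients sharing the maximal lower order $\sigma$ (in particular $A_m$, whose \emph{upper} order may exceed $\sigma$, so it can only be tamed on the set where it follows its lower type), together with the stray non-homogeneous term $A_{k+1}/f(z+c_l)$, which has no companion shift of $f$ and must be controlled through the pointwise lower bound of Lemma \ref{L4}.
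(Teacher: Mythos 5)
Your strategy coincides with the paper's own proof: divide $(\ref{1nh})$ by $f(z+c_{l})$, control the quotients $f(z+c_{j})/f(z+c_{l})$ by Lemma \ref{L1}(ii), control $1/f(z+c_{l})$ by Lemma \ref{L4}, bound the remaining $A_{j}$ by their orders and types, and tame $A_{m}$ on the infinite-logarithmic-measure sets of Lemmas \ref{L5} and \ref{L6}. Your contradiction framing (versus the paper's direct derivation of $\underline{\rho }_{A_{l}}(p,q)\leq \rho _{f}(p,q)+O(\varepsilon )$), your single constant $\beta =\max \{\underline{\tau }_{A_{m}}(p,q),\tau _{1}\}$ in place of the paper's four cases, your explicit choice of a max-modulus point $z_{r}$ (which the paper actually needs for its lower bounds on $|A_{l}(z)|$ but leaves implicit), and your separate treatment of $l=k+1$ (which the paper omits) are cosmetic or mildly clarifying differences. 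However, one step fails as written: the passage where you ensure $|z_{r}+c_{l}|\notin S_{2}$ by discarding the $|c_{l}|$-neighbourhood of $S_{2}$ and then assert that the resulting set $E$ still has infinite logarithmic measure.

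The enlargement of a set of finite linear and logarithmic measure by a fixed radius need not have finite logarithmic measure. Take $S_{2}=\bigcup_{n}[n,n+2^{-n}]$: its linear measure is finite, but its $|c_{l}|$-neighbourhood contains $\bigcup_{n}[n-|c_{l}|,n+|c_{l}|]$, whose logarithmic measure is bounded below by a constant times $\sum_{n}n^{-1}=\infty $; if $S_{2}$ consists of small intervals around a dense sequence, the neighbourhood can even be all of $(1,+\infty )$. Since $S_{2}$ is produced from the unknown solution $f$ while $S_{3}$, $S_{4}$ are produced from $A_{m}$, nothing prevents the enlarged $S_{2}$ from swallowing $S_{3}$ or $S_{4}$ entirely, in which case your set $E$ is empty and the final comparison ``for arbitrarily large $r$'' never takes place. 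The repair is exactly the paper's move and uses only tools you already invoke: by Lemma \ref{L2}, the function $g(z)=1/f(z+c_{l})$ is meromorphic with $\rho _{g}(p,q)=\rho _{f}(p,q)<\infty $, so applying Lemma \ref{L4} to $g$ itself gives $\left\vert 1/f(z+c_{l})\right\vert \leq \exp _{p}\left[ \left\{ \log _{q-1}(r)\right\} ^{\rho _{f}(p,q)+\varepsilon }\right] $ for all $|z|=r$ outside a set of finite logarithmic measure \emph{in the variable} $r$ --- no shift of the exceptional set and no enlargement are needed --- after which deleting $S_{1}\cup S_{2}\cup [0,1]$ from $S_{3}$ (or $S_{4}$) manifestly leaves infinite logarithmic measure, and the rest of your argument goes through. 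The same correction is required in your sketch of the case $l=k+1$, where you again propose to apply Lemma \ref{L4} at the shifted points $z_{r}+c_{j}$.
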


\begin{proof}
First let $f$ $\left( \neq 0\right) $ be a meromorphic solution of $\left( %
\ref{1nh}\right) $ and divide $\left( \ref{1nh}\right) $ by $f(z+c_{l})$ we
get%
\begin{equation*}
-A_{l}(z)=A_{k}(z)\frac{f(z+c_{k})}{f(z+c_{l})}+...+A_{l-1}(z)\frac{%
f(z+c_{l-1})}{f(z+c_{l})}+...+A_{1}(z)\frac{f(z+c_{1})}{f(z+c_{l})}+A_{0}(z)%
\frac{f(z)}{f(z+c_{l})}-\frac{A_{k+1}(z)}{f(z+c_{l})}.
\end{equation*}%
The above expression can be written as%
\begin{equation}
\left\vert A_{l}(z)\right\vert \leq \dsum\limits_{j=1,j\neq
m,l}^{k}\left\vert A_{j}(z)\right\vert \left\vert \frac{f(z+c_{j})}{%
f(z+c_{l})}\right\vert +\left\vert A_{m}(z)\right\vert \left\vert \frac{%
f(z+c_{m})}{f(z+c_{l})}\right\vert +\left\vert A_{0}(z)\right\vert
\left\vert \frac{f(z)}{f(z+c_{l})}\right\vert +\left\vert \frac{A_{k+1}(z)}{%
f(z+c_{l})}\right\vert .  \label{1}
\end{equation}

By Lemma $\ref{L1}$ $\left( ii\right) $, for any given $\varepsilon >0$,
there exists a subset $S_{1}\subset \left( 1,+\infty \right) $ of finite
logarithmic measure such that for all $\left\vert z\right\vert =r\notin
S_{1}\cup \left[ 0,1\right] ,$ we have%
\begin{equation}
\left\vert \frac{f(z+c_{j})}{f(z+c_{l})}\right\vert \leq \exp _{p}\left[
\left\{ \log _{q-1}\left( r\right) \right\} ^{\rho _{f}\left( p,q\right)
+\varepsilon }\right] ,\left( j\neq l\right)  \label{2}
\end{equation}%
and%
\begin{equation}
\left\vert \frac{f(z)}{f(z+c_{l})}\right\vert \leq \exp _{p}\left[ \left\{
\log _{q-1}\left( r\right) \right\} ^{\rho _{f}\left( p,q\right)
+\varepsilon }\right] .  \label{3}
\end{equation}

Again by Lemma $\ref{L2},$ we have $\rho \left( f(z+c_{l})\right) =\rho
\left( \frac{1}{f(z+c_{l})}\right) =\rho (f).$

Then by Lemma $\ref{L4},$ for the above $\varepsilon ,$ there exists a
subset $S_{2}\subset \left( 1,+\infty \right) $ of finite logarithmic
measure such that for all $\left\vert z\right\vert =r\notin S_{2}\cup \left[
0,1\right] ,$ we have%
\begin{equation}
\left\vert \frac{1}{f(z+c_{l})}\right\vert \leq \exp _{p}\left[ \left\{ \log
_{q-1}\left( r\right) \right\} ^{\rho _{f}\left( p,q\right) +\varepsilon }%
\right] .  \label{4}
\end{equation}

Now the following four cases arrive, namely;

\textbf{Case 1.} Consider the case when $\rho \left( p,q\right) <\underline{%
\rho }_{A_{l}}(p,q).$

Now for sufficiently large $r$ and for above $\varepsilon $ by definition we
have%
\begin{equation}
\left\vert A_{j}(z)\right\vert \leq \exp _{p}\left[ \left\{ \log
_{q-1}\left( r\right) \right\} ^{\rho _{A_{j}}(p,q)+\varepsilon }\right]
\leq \exp _{p}\left[ \left\{ \log _{q-1}\left( r\right) \right\} ^{\rho
(p,q)+\varepsilon }\right] ,j\neq l,m  \label{5}
\end{equation}

and%
\begin{equation}
\left\vert A_{l}(z)\right\vert \geq \exp _{p}\left[ \left\{ \log
_{q-1}\left( r\right) \right\} ^{\underline{\rho }_{A_{l}}(p,q)-\varepsilon }%
\right] .  \label{6}
\end{equation}

Again from Lemma $\ref{L5}$ by using the definition of $\underline{\rho }%
_{A_{m}},$for any $\varepsilon >0,$ there exists a subset $S_{3}\subset
\left( 1,+\infty \right) $ of infinite logarithmic measure such that for all 
$\left\vert z\right\vert =r\in S_{3},$ we have%
\begin{equation}
\left\vert A_{m}(z)\right\vert \leq \exp _{p}\left[ \left\{ \log
_{q-1}\left( r\right) \right\} ^{\underline{\rho }_{A_{m}}(p,q)+\varepsilon }%
\right] .  \label{7}
\end{equation}

Hence by substituting $\left( \ref{2}\right) -\left( \ref{7}\right) $ in $%
\left( \ref{1}\right) $, for all $\left\vert z\right\vert =r\in S_{3}$ $%
\backslash $ $\left( \left[ 0,1\right] \cup S_{1}\cup S_{2}\right) ,$ we have%
\begin{eqnarray*}
\exp _{p}\left[ \left\{ \log _{q-1}\left( r\right) \right\} ^{\underline{%
\rho }_{A_{l}}(p,q)-\varepsilon }\right] &\leq &(k-2)\exp _{p}\left[ \left\{
\log _{q-1}\left( r\right) \right\} ^{\rho (p,q)+\varepsilon }\right] .\exp
_{p}\left[ \left\{ \log _{q-1}\left( r\right) \right\} ^{\rho _{f}\left(
p,q\right) +\varepsilon }\right] \\
&&+\exp _{p}\left[ \left\{ \log _{q-1}\left( r\right) \right\} ^{\underline{%
\rho }_{A_{m}}(p,q)+\varepsilon }\right] .\exp _{p}\left[ \left\{ \log
_{q-1}\left( r\right) \right\} ^{\rho _{f}\left( p,q\right) +\varepsilon }%
\right] \\
&&+\exp _{p}\left[ \left\{ \log _{q-1}\left( r\right) \right\} ^{\rho
(p,q)+\varepsilon }\right] .\exp _{p}\left[ \left\{ \log _{q-1}\left(
r\right) \right\} ^{\rho _{f}\left( p,q\right) +\varepsilon }\right] .
\end{eqnarray*}

Choose sufficiently small $\varepsilon $ so that $0<3\varepsilon <\underline{%
\rho }_{A_{l}}(p,q)-\rho (p,q)$ and for all $\left\vert z\right\vert =r\in
S_{3}$ $\backslash $ $\left( \left[ 0,1\right] \cup S_{1}\cup S_{2}\right)
,r\rightarrow \infty $ we have%
\begin{equation*}
\exp _{p}\left[ \left\{ \log _{q-1}\left( r\right) \right\} ^{\underline{%
\rho }_{A_{l}}(p,q)-2\varepsilon }\right] \leq \exp _{p}\left[ \left\{ \log
_{q-1}\left( r\right) \right\} ^{\rho _{f}\left( p,q\right) +\varepsilon }%
\right] .
\end{equation*}

Therefore%
\begin{equation*}
\underline{\rho }_{A_{l}}(p,q)\leq \rho _{f}\left( p,q\right) +3\varepsilon .
\end{equation*}

Since $\varepsilon >0$ is arbitrary, we have $\rho _{f}\left( p,q\right)
\geq \underline{\rho }_{A_{l}}(p,q).$

If $A_{k+1}=0,$ by substituting $\left( \ref{2}\right) -\left( \ref{7}%
\right) $ in $\left( \ref{1}\right) $, for all $\left\vert z\right\vert
=r\in S_{3}$ $\backslash $ $\left( \left[ 0,1\right] \cup S_{1}\cup
S_{2}\right) ,$ we have%
\begin{eqnarray*}
\exp _{p}\left[ \left\{ \log _{q-1}\left( r\right) \right\} ^{\underline{%
\rho }_{A_{l}}(p,q)-\varepsilon }\right] &\leq &(k-1)\exp _{p}\left[ \left\{
\log _{q-1}\left( r\right) \right\} ^{\rho (p,q)+\varepsilon }\right] .\exp
_{p}\left[ \left\{ \log _{q-1}\left( r\right) \right\} ^{\rho _{f}\left(
p,q\right) +\varepsilon }\right] \\
&&+\exp _{p}\left[ \left\{ \log _{q-1}\left( r\right) \right\} ^{\underline{%
\rho }_{A_{m}}(p,q)+\varepsilon }\right] .\exp _{p}\left[ \left\{ \log
_{q-1}\left( r\right) \right\} ^{\rho _{f}\left( p,q\right) +\varepsilon }%
\right] \\
&&+\exp _{p}\left[ \left\{ \log _{q-1}\left( r\right) \right\} ^{\rho
(p,q)+\varepsilon }\right] .\exp _{p}\left[ \left\{ \log _{q-1}\left(
r\right) \right\} ^{\rho _{f}\left( p,q\right) +\varepsilon }\right] .
\end{eqnarray*}

Choose sufficiently small $\varepsilon $ so that $0<3\varepsilon <\underline{%
\rho }_{A_{l}}(p,q)-\rho (p,q)$ and for all $\left\vert z\right\vert =r\in
S_{3}$ $\backslash $ $\left( \left[ 0,1\right] \cup S_{1}\cup S_{2}\right)
,r\rightarrow \infty $ we have%
\begin{equation*}
\exp _{p}\left[ \left\{ \log _{q-1}\left( r\right) \right\} ^{\underline{%
\rho }_{A_{l}}(p,q)-2\varepsilon }\right] \leq \exp _{p}\left[ \left\{ \log
_{q-1}\left( r\right) \right\} ^{\rho _{f}\left( p,q\right) +\varepsilon }%
\right] .
\end{equation*}

Therefore%
\begin{equation*}
\underline{\rho }_{A_{l}}(p,q)\leq \rho _{f}\left( p,q\right) +3\varepsilon .
\end{equation*}

Since $\varepsilon >0$ is arbitrary, we have $\rho _{f}\left( p,q\right)
\geq \underline{\rho }_{A_{l}}(p,q).$

\textbf{Case 2.} \ Consider $\max \left\{ \rho _{A_{j}}(p,q),j\neq
m,l\right\} =\alpha <\underline{\rho }_{A_{m}}(p,q)=\underline{\rho }%
_{A_{l}}(p,q),$ \underline{$\tau $}$_{A_{l}}(p,q)>$\underline{$\tau $}$%
_{A_{m}}(p,q).$

Now for sufficiently large $r$ and for above $\varepsilon $ by definition we
have%
\begin{equation}
\left\vert A_{j}(z)\right\vert \leq \exp _{p}\left[ \left\{ \log
_{q-1}\left( r\right) \right\} ^{\rho _{A_{j}}(p,q)+\varepsilon }\right]
\leq \exp _{p}\left[ \left\{ \log _{q-1}\left( r\right) \right\} ^{\alpha
+\varepsilon }\right] ,j\neq l,m  \label{8}
\end{equation}%
and%
\begin{equation}
\left\vert A_{l}(z)\right\vert \geq \exp _{p}\left[ \left( \underline{\tau }%
_{A_{l}}(p,q)-\varepsilon \right) \left\{ \log _{q-1}\left( r\right)
\right\} ^{\underline{\rho }_{A_{l}}(p,q)}\right] .  \label{9}
\end{equation}

Again by the Lemma $\ref{L6}$ using the definition of $\underline{\tau }%
_{A_{m}},$for any $\varepsilon >0,$ there exists a subset $S_{4}\subset
\left( 1,+\infty \right) $ of infinite logarithmic measure such that for all 
$\left\vert z\right\vert =r\in S_{4},$ we have%
\begin{eqnarray}
\left\vert A_{m}(z)\right\vert &\leq &\exp _{p}\left[ \left( \underline{\tau 
}_{A_{m}}(p,q)+\varepsilon \right) \left\{ \log _{q-1}\left( r\right)
\right\} ^{\underline{\rho }_{A_{m}}(p,q)}\right]  \label{10} \\
&=&\exp _{p}\left[ \left( \underline{\tau }_{A_{m}}(p,q)+\varepsilon \right)
\left\{ \log _{q-1}\left( r\right) \right\} ^{\underline{\rho }_{A_{l}}(p,q)}%
\right] .
\end{eqnarray}

Hence by substituting $\left( \ref{2}\right) -\left( \ref{4}\right) ,\left( %
\ref{8}\right) -\left( \ref{10}\right) $ in $\left( \ref{1}\right) $, for
all $\left\vert z\right\vert =r\in S_{4}$ $\backslash $ $\left( \left[ 0,1%
\right] \cup S_{1}\cup S_{2}\right) ,$ we have%
\begin{eqnarray*}
&&\exp _{p}\left[ \left( \underline{\tau }_{A_{l}}(p,q)-\varepsilon \right)
\left\{ \log _{q-1}\left( r\right) \right\} ^{\underline{\rho }_{A_{l}}(p,q)}%
\right] \\
&\leq &(k-1)\exp _{p}\left[ \left\{ \log _{q-1}\left( r\right) \right\}
^{\alpha +\varepsilon }\right] .\exp _{p}\left[ \left\{ \log _{q-1}\left(
r\right) \right\} ^{\rho _{f}\left( p,q\right) +\varepsilon }\right] \\
&&+\exp _{p}\left[ \left( \underline{\tau }_{A_{m}}(p,q)+\varepsilon \right)
\left\{ \log _{q-1}\left( r\right) \right\} ^{\underline{\rho }_{A_{l}}(p,q)}%
\right] .\exp _{p}\left[ \left\{ \log _{q-1}\left( r\right) \right\} ^{\rho
_{f}\left( p,q\right) +\varepsilon }\right] \\
&&+\exp _{p}\left[ \left\{ \log _{q-1}\left( r\right) \right\} ^{\alpha
+\varepsilon }\right] .\exp _{p}\left[ \left\{ \log _{q-1}\left( r\right)
\right\} ^{\rho _{f}\left( p,q\right) +\varepsilon }\right] .
\end{eqnarray*}

Choose sufficiently small $\varepsilon $ so that $0<2\varepsilon <\min
\left\{ \underline{\rho }_{A_{l}}(p,q)-\alpha ,\underline{\tau }%
_{A_{l}}(p,q)-\underline{\tau }_{A_{m}}(p,q)\right\} $ and

for all $\left\vert z\right\vert =r\in S_{4}$ $\backslash $ $\left( \left[
0,1\right] \cup S_{1}\cup S_{2}\right) ,r\rightarrow \infty $ we have%
\begin{equation*}
\exp _{p}\left[ \left( \underline{\tau }_{A_{l}}(p,q)-\underline{\tau }%
_{A_{m}}(p,q)-2\varepsilon \right) \left\{ \log _{q-1}\left( r\right)
\right\} ^{\underline{\rho }_{A_{l}}(p,q)-\varepsilon }\right] \leq \exp _{p}%
\left[ \left\{ \log _{q-1}\left( r\right) \right\} ^{\rho _{f}\left(
p,q\right) +\varepsilon }\right] ,
\end{equation*}

which implies%
\begin{equation*}
\underline{\rho }_{A_{l}}(p,q)\leq \rho _{f}\left( p,q\right) +2\varepsilon .
\end{equation*}

Since $\varepsilon >0$ is arbitrary, therefore $\rho _{f}\left( p,q\right)
\geq \underline{\rho }_{A_{l}}(p,q).$

\textbf{Case 3.} Suppose $\underline{\rho }_{A_{m}}(p,q)<\max \left\{ \rho
_{A_{j}}(p,q),\text{ }j\neq m,l\right\} =\underline{\rho }_{A_{l}}(p,q)$ and

$\max \left\{ \tau _{A_{j}}(p,q):\rho _{A_{j}}(p,q)=\underline{\rho }%
_{A_{l}}(p,q),\text{ }j\neq m,l\right\} =\tau _{1}<$ \underline{$\tau $}$%
_{A_{l}}(p,q).$

Now for sufficiently large $r$ and for above $\varepsilon $ by definition we
have%
\begin{eqnarray}
\left\vert A_{j}(z)\right\vert &\leq &\exp _{p}\left[ \left\{ \log
_{q-1}\left( r\right) \right\} ^{\rho _{A_{j}}(p,q)+\varepsilon }\right] 
\notag \\
&\leq &\exp _{p}\left[ \left\{ \log _{q-1}\left( r\right) \right\} ^{%
\underline{\rho }_{A_{l}}(p,q)-\varepsilon }\right] ,\text{ if }\rho
_{A_{j}}(p,q)<\underline{\rho }_{A_{l}}(p,q),j\neq l,m  \label{11}
\end{eqnarray}%
and%
\begin{equation}
\left\vert A_{j}(z)\right\vert \leq \exp _{p}\left[ \left( \tau
_{1}+\varepsilon \right) \left\{ \log _{q-1}\left( r\right) \right\} ^{%
\underline{\rho }_{A_{l}}(p,q)}\right] ,\text{if }\rho _{A_{j}}(p,q)=%
\underline{\rho }_{A_{l}}(p,q),j\neq l,m.  \label{12}
\end{equation}

Hence by substituting $\left( \ref{2}\right) -\left( \ref{4}\right) ,\left( %
\ref{7}\right) ,\left( \ref{9}\right) ,\left( \ref{11}\right) ,\left( \ref%
{12}\right) $ in $\left( \ref{1}\right) $, for all $\left\vert z\right\vert
=r\in S_{3}$ $\backslash $ $\left( \left[ 0,1\right] \cup S_{1}\cup
S_{2}\right) ,$ we have%
\begin{eqnarray*}
&&\exp _{p}\left[ \left( \underline{\tau }_{A_{l}}(p,q)-\varepsilon \right)
\left\{ \log _{q-1}\left( r\right) \right\} ^{\underline{\rho }_{A_{l}}(p,q)}%
\right] \\
&\leq &O\left( \exp _{p}\left[ \left( \tau _{1}+\varepsilon \right) \left\{
\log _{q-1}\left( r\right) \right\} ^{\underline{\rho }_{A_{l}}(p,q)}\right]
.\exp _{p}\left[ \left\{ \log _{q-1}\left( r\right) \right\} ^{\rho
_{f}\left( p,q\right) +\varepsilon }\right] \right) \\
&&+O\left( \exp _{p}\left[ \left\{ \log _{q-1}\left( r\right) \right\} ^{%
\underline{\rho }_{A_{l}}(p,q)-\varepsilon }\right] .\exp _{p}\left[ \left\{
\log _{q-1}\left( r\right) \right\} ^{\rho _{f}\left( p,q\right)
+\varepsilon }\right] \right) \\
&&+\exp _{p}\left[ \left\{ \log _{q-1}\left( r\right) \right\} ^{\underline{%
\rho }_{Am}(p,q)+\varepsilon }\right] .\exp _{p}\left[ \left\{ \log
_{q-1}\left( r\right) \right\} ^{\rho _{f}\left( p,q\right) +\varepsilon }%
\right] \\
&&+\exp _{p}\left[ \left( \tau _{1}+\varepsilon \right) \left\{ \log
_{q-1}\left( r\right) \right\} ^{\underline{\rho }_{A_{l}}(p,q)}\right]
.\exp _{p}\left[ \left\{ \log _{q-1}\left( r\right) \right\} ^{\rho
_{f}\left( p,q\right) +\varepsilon }\right] .
\end{eqnarray*}

Choose sufficiently small $\varepsilon $ so that $0<2\varepsilon <\min
\left\{ \underline{\rho }_{A_{l}}(p,q)-\underline{\rho }_{A_{m}}(p,q),%
\underline{\tau }_{A_{l}}(p,q)-\tau _{1}\right\} $and

for all $\left\vert z\right\vert =r\in S_{3}$ $\backslash $ $\left( \left[
0,1\right] \cup S_{1}\cup S_{2}\right) ,r\rightarrow \infty $ we have%
\begin{equation*}
\exp _{p}\left[ \left( \underline{\tau }_{A_{l}}(p,q)-\tau _{1}-2\varepsilon
\right) \left\{ \log _{q-1}\left( r\right) \right\} ^{\underline{\rho }%
_{A_{l}}(p,q)-\varepsilon }\right] \leq \exp _{p}\left[ \left\{ \log
_{q-1}\left( r\right) \right\} ^{\rho _{f}\left( p,q\right) +\varepsilon }%
\right] ,
\end{equation*}

which implies%
\begin{equation*}
\underline{\rho }_{A_{l}}(p,q)\leq \rho _{f}\left( p,q\right) +2\varepsilon .
\end{equation*}

Since $\varepsilon >0$ is arbitrary, therefore $\rho _{f}\left( p,q\right)
\geq \underline{\rho }_{A_{l}}(p,q).$

\textbf{Case 4.} Suppose $\max \left\{ \rho _{A_{j}}(p,q),j\neq m,l\right\} =%
\underline{\rho }_{A_{m}}(p,q)=\underline{\rho }_{A_{l}}(p,q)$ and

$\max \left\{ \underline{\tau }_{A_{m}}(p,q),\tau _{A_{j}}(p,q):\rho
_{A_{j}}(p,q)=\underline{\rho }_{A_{l}}(p,q),\text{ }j\neq m,l\right\} =\tau
_{2}<$ \underline{$\tau $}$_{A_{l}}(p,q).$

Hence by substituting $\left( \ref{2}\right) -\left( \ref{4}\right) ,\left( %
\ref{9}\right) ,\left( \ref{10}\right) ,\left( \ref{11}\right) ,\left( \ref%
{12}\right) $above all in $\left( \ref{1}\right) $,

for all $\left\vert z\right\vert =r\in S_{4}$ $\backslash $ $\left( \left[
0,1\right] \cup S_{1}\cup S_{2}\right) ,$ we have%
\begin{eqnarray*}
&&\exp _{p}\left[ \left( \underline{\tau }_{A_{l}}(p,q)-\varepsilon \right)
\left\{ \log _{q-1}\left( r\right) \right\} ^{\underline{\rho }_{A_{l}}(p,q)}%
\right] \\
&\leq &O\left( \exp _{p}\left[ \left( \tau _{2}+\varepsilon \right) \left\{
\log _{q-1}\left( r\right) \right\} ^{\underline{\rho }_{A_{l}}(p,q)}\right]
.\exp _{p}\left[ \left\{ \log _{q-1}\left( r\right) \right\} ^{\rho
_{f}\left( p,q\right) +\varepsilon }\right] \right) \\
&&+O\left( \exp _{p}\left[ \left\{ \log _{q-1}\left( r\right) \right\} ^{%
\underline{\rho }_{A_{l}}(p,q)-\varepsilon }\right] .\exp _{p}\left[ \left\{
\log _{q-1}\left( r\right) \right\} ^{\rho _{f}\left( p,q\right)
+\varepsilon }\right] \right) \\
&&+\exp _{p}\left[ \left( \underline{\tau }_{A_{m}}(p,q)+\varepsilon \right)
\left\{ \log _{q-1}\left( r\right) \right\} ^{\underline{\rho }_{A_{l}}(p,q)}%
\right] .\exp _{p}\left[ \left\{ \log _{q-1}\left( r\right) \right\} ^{\rho
_{f}\left( p,q\right) +\varepsilon }\right] \\
&&+\exp _{p}\left[ \left( \tau _{2}+\varepsilon \right) \left\{ \log
_{q-1}\left( r\right) \right\} ^{\underline{\rho }_{A_{l}}(p,q)}\right]
.\exp _{p}\left[ \left\{ \log _{q-1}\left( r\right) \right\} ^{\rho
_{f}\left( p,q\right) +\varepsilon }\right] .
\end{eqnarray*}

Choose sufficiently small $\varepsilon $ so that $0<2\varepsilon <\underline{%
\tau }_{A_{l}}(p,q)-\tau _{2}$ and

for all $\left\vert z\right\vert =r\in S_{4}$ $\backslash $ $\left( \left[
0,1\right] \cup S_{1}\cup S_{2}\right) ,r\rightarrow \infty $ we have%
\begin{equation*}
\exp _{p}\left[ \left( \underline{\tau }_{A_{l}}(p,q)-\tau _{2}-2\varepsilon
\right) \left\{ \log _{q-1}\left( r\right) \right\} ^{\underline{\rho }%
_{A_{l}}(p,q)-\varepsilon }\right] \leq \exp _{p}\left[ \left\{ \log
_{q-1}\left( r\right) \right\} ^{\rho _{f}\left( p,q\right) +\varepsilon }%
\right] ,
\end{equation*}

which implies%
\begin{equation*}
\underline{\rho }_{A_{l}}(p,q)\leq \rho _{f}\left( p,q\right) +2\varepsilon .
\end{equation*}

Since $\varepsilon >0$ is arbitrary, therefore $\rho _{f}\left( p,q\right)
\geq \underline{\rho }_{A_{l}}(p,q).$

In Case 2, Case 3, Case 4, for $A_{k+1}=0$ we obtain the same result $\rho
_{f}\left( p,q\right) \geq \underline{\rho }_{A_{l}}(p,q),$ by using the
method describe in Case 1.
\end{proof}

\begin{theorem}
\label{T2} Let $A_{0}(z),A_{1}(z),....,A_{k}(z),$ $A_{k+1}(z)$ be
meromorphic functions of generalize $\left( p,q\right) $-order$.$ If there
exist integers $l,m$ $\left( 0\leq l,m\leq k+1\right) $ and

the following conditions hold together :

(1) $0<\max \left\{ \underline{\rho }_{A_{m}}(p,q),\rho _{A_{j}}(p,q),j\neq
m,l\right\} =\rho \left( p,q\right) \leq \underline{\rho }%
_{A_{l}}(p,q)<\infty ;$

(2) \underline{$\tau $}$_{A_{l}}(p,q)>$\underline{$\tau $}$_{A_{m}}(p,q),$
when $\underline{\rho }_{A_{l}}(p,q)=\underline{\rho }_{A_{m}}(p,q);$

(3) $\dsum\limits_{\substack{ \rho _{A_{j}}\left( p,q\right) =\underline{%
\rho }_{A_{l}}(p,q)>0,  \\ j\neq m,l}}\tau _{A_{j}}(p,q)<$\underline{$\tau $}%
$_{A_{l}}(p,q)<+\infty ,$ when $\underline{\rho }_{A_{l}}(p,q)=\max \left\{
\rho _{A_{j}}(p,q),j\neq m,l\right\} .$

(4) $\dsum\limits_{\substack{ \rho _{A_{j}}\left( p,q\right) =\underline{%
\rho }_{A_{l}}(p,q)>0,  \\ j\neq m,l}}\tau _{A_{j}}(p,q)+\underline{\tau }%
_{A_{m}}(p,q)<$\underline{$\tau $}$_{A_{l}}(p,q)<+\infty ,$ when $\underline{%
\rho }_{A_{l}}(p,q)=\underline{\rho }_{A_{m}}(p,q)=\max \left\{ \rho
_{A_{j}}(p,q),j\neq m,l\right\} .$

(5) $\lambda \left( \frac{1}{A_{l}}\right) <\underline{\rho }%
_{A_{l}}(p,q)<\infty .$

Then every meromorphic solution $f\left( \neq 0\right) $ of equation $\left( %
\ref{1nh}\right) $ satisfies $\rho _{f}(p,q)\geq \underline{\rho }%
_{A_{l}}(p,q),$where $p\geq q\geq 2.$
\end{theorem}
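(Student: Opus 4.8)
The plan is to carry the argument of Theorem \ref{T1} over to meromorphic coefficients by replacing the pointwise maximum-modulus estimates with Nevanlinna proximity-function estimates, the point being that a meromorphic $A_{l}$ cannot be bounded below pointwise. As in Theorem \ref{T1} I would move the $A_{l}$-term of \eqref{1nh} to the left and divide by $f(z+c_{l})$, obtaining
\[
-A_{l}(z)=\sum_{\substack{ j=1 \\ j\neq l}}^{k}A_{j}(z)\frac{f(z+c_{j})}{f(z+c_{l})}+A_{0}(z)\frac{f(z)}{f(z+c_{l})}-\frac{A_{k+1}(z)}{f(z+c_{l})}.
\]
Passing to proximity functions and using $m(r,\sum)\leq\sum m(r,\cdot)+O(1)$ and $m(r,gh)\leq m(r,g)+m(r,h)$ gives the proximity analogue of \eqref{1},
\begin{align*}
m(r,A_{l})&\leq\sum_{j\neq l}m(r,A_{j})+\sum_{j\neq l}m\!\left(r,\frac{f(z+c_{j})}{f(z+c_{l})}\right)\\
&\quad+m(r,A_{k+1})+m\!\left(r,\frac{1}{f(z+c_{l})}\right)+O(1),
\end{align*}
where $c_{0}:=0$. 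By Lemma \ref{L3}$(ii)$ every ratio obeys $m(r,f(z+c_{j})/f(z+c_{l}))=O(\exp_{p-1}[\{\log_{q-1}r\}^{\rho_{f}(p,q)+\varepsilon}])$, and by Lemma \ref{L2} together with $m(r,1/f(z+c_{l}))\leq T(r,f(z+c_{l}))+O(1)$ the reciprocal term has the same growth; I collect these into a \emph{solution part} of size $O(\exp_{p-1}[\{\log_{q-1}r\}^{\rho_{f}(p,q)+\varepsilon}])$, valid for all large $r$ with no exceptional set.

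The decisive new ingredient is hypothesis (5). Since $\lambda\left(\frac{1}{A_{l}}\right)<\underline{\rho}_{A_{l}}(p,q)$, for all large $r$ one has $N(r,A_{l})\leq\exp_{p-1}[\{\log_{q-1}r\}^{\lambda(1/A_{l})+\varepsilon}]$, while the definitions of lower order and lower type give $T(r,A_{l})\geq\exp_{p-1}[\{\log_{q-1}r\}^{\underline{\rho}_{A_{l}}(p,q)-\varepsilon}]$ and, when a type comparison is needed, $T(r,A_{l})\geq\exp_{p-1}[(\underline{\tau}_{A_{l}}(p,q)-\varepsilon)\{\log_{q-1}r\}^{\underline{\rho}_{A_{l}}(p,q)}]$. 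Choosing $\varepsilon$ so small that $\lambda(1/A_{l})+\varepsilon<\underline{\rho}_{A_{l}}(p,q)-\varepsilon$ forces $N(r,A_{l})=o(T(r,A_{l}))$, whence $m(r,A_{l})=T(r,A_{l})-N(r,A_{l})\geq\tfrac{1}{2}T(r,A_{l})$ for large $r$. This turns the lower bound for $T(r,A_{l})$ into a lower bound for $m(r,A_{l})$ and is the exact replacement for the pointwise estimate $\left\vert A_{l}(z)\right\vert \geq\exp_{p}[\cdots]$ of Theorem \ref{T1}.

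With these two facts I would run the same four cases as in Theorem \ref{T1}, bounding $m(r,A_{j})\leq T(r,A_{j})$ throughout. In Case 1 the exponent $\underline{\rho}_{A_{l}}(p,q)-\varepsilon$ of $m(r,A_{l})$ strictly dominates the exponent $\rho(p,q)+\varepsilon$ of every coefficient term (using Lemma \ref{L7} for $A_{m}$ on a set of infinite logarithmic measure). In Cases 2--4 all the relevant orders equal $\underline{\rho}_{A_{l}}(p,q)$, so I bound each maximal-order coefficient by its type and the lower-order ones by Lemmas \ref{L7} and \ref{L8}; the subadditivity $m(r,\sum)\leq\sum m(r,\cdot)$ produces a finite sum of such terms, whose growth is governed by $\exp_{p-1}[(\sigma+\varepsilon)\{\log_{q-1}r\}^{\underline{\rho}_{A_{l}}(p,q)}]$ for some $\sigma$ not exceeding the type sums in (3)--(4) (augmented by $\underline{\tau}_{A_{m}}(p,q)$ when $A_{m}$ has maximal order), and hypotheses (2)--(4) give $\sigma<\underline{\tau}_{A_{l}}(p,q)$. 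In every case, for small $\varepsilon$ the whole coefficient part is $o(T(r,A_{l}))$, so along the relevant infinite-logarithmic-measure set the inequality reduces, up to a bounded multiplicative factor, to
\[
\exp_{p-1}\!\left[\{\log_{q-1}r\}^{\underline{\rho}_{A_{l}}(p,q)-\varepsilon}\right]\leq\exp_{p-1}\!\left[\{\log_{q-1}r\}^{\rho_{f}(p,q)+\varepsilon}\right]
\]
(with $\underline{\rho}_{A_{l}}(p,q)-\varepsilon$ replaced by the corresponding type expression in the type cases). Applying $\log_{p-1}$ and comparing powers of $\log_{q-1}r$ yields $\underline{\rho}_{A_{l}}(p,q)\leq\rho_{f}(p,q)+c\varepsilon$ for an absolute constant $c>0$, and letting $\varepsilon\to0$ gives $\rho_{f}(p,q)\geq\underline{\rho}_{A_{l}}(p,q)$. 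The homogeneous case $A_{k+1}=0$ is identical with one term fewer.

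I expect the main obstacle to be precisely this lower bound for $m(r,A_{l})$: for a meromorphic $A_{l}$ the modulus can be small near poles, so the pointwise lower bound of Theorem \ref{T1} is unavailable, and everything rests on using (5) to show that the poles are too sparse on the $(p,q)$-scale to erode $T(r,A_{l})$, so that the proximity function keeps the full growth. A secondary point is to confirm that the nonhomogeneous term $\frac{A_{k+1}}{f(z+c_{l})}$ enters only the solution part, which needs $m(r,1/f(z+c_{l}))\leq T(r,f(z+c_{l}))+O(1)$ and Lemma \ref{L2} to keep it at the level $\rho_{f}(p,q)$, and to check that the finitely many exceptional sets of finite logarithmic measure and the infinite-logarithmic-measure sets $S_{5}$, $S_{6}$ still intersect in a set of infinite logarithmic measure on which all estimates hold at once.
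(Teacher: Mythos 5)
Your proposal is correct and takes essentially the same route as the paper's proof: the same division of \eqref{1nh} by $f(z+c_{l})$, passage to Nevanlinna functions with Lemmas \ref{L2} and \ref{L3} controlling the solution part, Lemmas \ref{L7} and \ref{L8} supplying the infinite-logarithmic-measure sets for $A_{m}$, and the same four-case analysis driven by hypotheses (1)--(4). The only difference is bookkeeping of hypothesis (5): the paper writes $T(r,A_{l})=m(r,A_{l})+N(r,A_{l})$ and carries $N(r,A_{l})$ to the right-hand side as one more term dominated by the lower bound on $T(r,A_{l})$, whereas you use (5) to absorb the poles on the left via $m(r,A_{l})\geq \tfrac{1}{2}T(r,A_{l})$; the two devices are interchangeable.
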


\begin{proof}
Suppose that order of $f$ \ is finite, otherwise results hold trivially. Now
by $\left( \ref{1nh}\right) $ we obtain%
\begin{eqnarray}
T(r,A_{l}(z)) &=&m(r,A_{l}(z))+N(r,A_{l}(z))  \notag \\
&\leq &\sum_{j=0,j\neq m,l}^{k+1}m(r,A_{j}(z))+m(r,A_{l}(z))+\sum_{j=1,j\neq
l}^{k}m\left( r,\frac{f(z+c_{j})}{f(z+c_{l})}\right)  \notag \\
&&+m\left( r,\frac{f(z)}{f(z+c_{l})}\right) +m\left( r,\frac{1}{f(z+c_{l})}%
\right) +N(r,A_{l}(z))+O(1)  \notag \\
&\leq &\sum_{j=0,j\neq
m,l}^{k+1}T(r,A_{j}(z))+T(r,A_{m}(z))+N(r,A_{l}(z))+T\left( r,\frac{1}{%
f(z+c_{l})}\right)  \notag \\
&&+\sum_{j=1,j\neq l}^{k}m\left( r,\frac{f(z+c_{j})}{f(z+c_{l})}\right)
+m\left( r,\frac{f(z)}{f(z+c_{l})}\right) +O(1)  \notag \\
&\leq &\sum_{j=0,j\neq
m,l}^{k+1}T(r,A_{j}(z))+T(r,A_{m}(z))+N(r,A_{l}(z))+2T\left( r+\left\vert
c_{l}\right\vert ,f(z)\right)  \notag \\
&&+\sum_{j=1,j\neq l}^{k}m\left( r,\frac{f(z+c_{j})}{f(z+c_{l})}\right)
+m\left( r,\frac{f(z)}{f(z+c_{l})}\right) +O(1)  \label{13}
\end{eqnarray}

By using Lemma $\ref{3}$ for any given $\varepsilon >0,$ we obtain

\begin{equation}
m\left( r,\frac{f(z+c_{j})}{f(z+c_{l})}\right) =m\left( r,\frac{f(z)}{%
f(z+c_{l})}\right) =O\left( \exp _{p-1}\left[ \left\{ \log _{q-1}\left(
r\right) \right\} ^{\rho (p,q)+\varepsilon }\right] \right) ,\text{ }j\neq l.
\label{14}
\end{equation}

Again for sufficiently large $r$ and for above $\varepsilon $ by definition
we have%
\begin{equation}
N(r,A_{l}(z))\leq \exp _{p-1}\left[ \left\{ \log _{q-1}\left( r\right)
\right\} ^{\lambda \left( \frac{1}{A_{l}}\right) +\varepsilon }\right] .
\label{15}
\end{equation}

and we know by Lemma $\ref{2}$%
\begin{equation*}
2T\left( r+\left\vert c_{l}\right\vert ,f(z)\right) \leq 2T\left(
2r,f(z)\right) .
\end{equation*}

We consider four cases here.

\textbf{Case 1.} Consider the case when $\rho \left( p,q\right) <\underline{%
\rho }_{A_{l}}(p,q).$

Now for sufficiently large $r$ and for above $\varepsilon $ by definition we
have

\begin{equation}
T(r,A_{j}(z))\leq \exp _{p-1}\left[ \left\{ \log _{q-1}\left( r\right)
\right\} ^{\rho _{A_{j}}(p,q)+\varepsilon }\right] \leq \exp _{p-1}\left[
\left\{ \log _{q-1}\left( r\right) \right\} ^{\rho (p,q)+\varepsilon }\right]
,j\neq m,l  \label{16}
\end{equation}

and%
\begin{equation}
T(r,f)\leq \exp _{p-1}\left[ \left\{ \log _{q-1}\left( r\right) \right\}
^{\rho _{f}(p,q)+\varepsilon }\right] .  \label{17}
\end{equation}

Again for sufficiently large $r$ and for small $\varepsilon $ by definition
we have%
\begin{equation}
T(r,A_{l}(z))\geq \exp _{p-1}\left[ \left\{ \log _{q-1}\left( r\right)
\right\} ^{\underline{\rho }_{A_{l}}(p,q)-\varepsilon }\right] .  \label{18}
\end{equation}

Again using Lemma $\ref{7}$ by the definition of $\underline{\rho }_{A_{m}},$%
for any $\varepsilon >0,$ there exists a subset $S_{5}\subset \left(
1,+\infty \right) $ of infinite logarithmic measure such that for all $%
\left\vert z\right\vert =r\in S_{5},$ we have%
\begin{equation}
T(r,A_{m}(z))\leq \exp _{p-1}\left[ \left\{ \log _{q-1}\left( r\right)
\right\} ^{\underline{\rho }_{A_{m}}(p,q)+\varepsilon }\right] .  \label{18A}
\end{equation}

Hence by substituting $\left( \ref{14}\right) -\left( \ref{18}\right) $ in $%
\left( \ref{13}\right) $, for all $\left\vert z\right\vert =r\in S_{5}$ we
obtain%
\begin{eqnarray*}
\exp _{p-1}\left[ \left\{ \log _{q-1}\left( r\right) \right\} ^{\underline{%
\rho }_{A_{l}}(p,q)-\varepsilon }\right] &\leq &k\exp _{p-1}\left[ \left\{
\log _{q-1}\left( r\right) \right\} ^{\rho (p,q)+\varepsilon }\right] +\exp
_{p-1}\left[ \left\{ \log _{q-1}\left( r\right) \right\} ^{\underline{\rho }%
_{A_{m}}(p,q)+\varepsilon }\right] \\
&&+\exp _{p-1}\left[ \left\{ \log _{q-1}\left( r\right) \right\} ^{\lambda
\left( \frac{1}{A_{l}}\right) +\varepsilon }\right] +2\exp _{p-1}\left[
\left\{ \log _{q-1}\left( 2r\right) \right\} ^{\rho _{f}\left( p,q\right)
+\varepsilon }\right] \\
&&+O\left( \exp _{p-1}\left[ \left\{ \log _{q-1}\left( r\right) \right\}
^{\rho \left( p,q\right) +\varepsilon }\right] \right) .
\end{eqnarray*}

Choose sufficiently small $\varepsilon $ so that $0<3\varepsilon <\min
\left\{ \underline{\rho }_{A_{l}}(p,q)-\rho (p,q),\text{ }\underline{\rho }%
_{A_{l}}(p,q)-\lambda \left( \frac{1}{A_{l}}\right) \right\} $and for all $%
\left\vert z\right\vert =r\in S_{5},r\rightarrow \infty $ we have%
\begin{equation*}
\exp _{p-1}\left[ \left\{ \log _{q-1}\left( r\right) \right\} ^{\underline{%
\rho }_{A_{l}}(p,q)-2\varepsilon }\right] \leq \exp _{p-1}\left[ \left\{
\log _{q-1}\left( r\right) \right\} ^{\rho _{f}(p,q)+\varepsilon }\right] ,
\end{equation*}

which implies%
\begin{equation*}
\underline{\rho }_{A_{l}}\left( p,q\right) \leq \rho _{f}\left( p,q\right)
+3\varepsilon .
\end{equation*}

Since $\varepsilon >0$ is arbitrary, therefore $\rho _{f}\left( p,q\right)
\geq \underline{\rho }_{A_{l}}(p,q).$

If $A_{k+1}=0,$ then by substituting $\left( \ref{14}\right) -\left( \ref{16}%
\right) ,\left( \ref{18}\right) ,\left( \ref{18A}\right) $ in $\left( \ref%
{13}\right) $, for all $\left\vert z\right\vert =r\in S_{5}$ we obtain%
\begin{eqnarray*}
\exp _{p-1}\left[ \left\{ \log _{q-1}\left( r\right) \right\} ^{\underline{%
\rho }_{A_{l}}(p,q)-\varepsilon }\right] &\leq &(k-1)\exp _{p-1}\left[
\left\{ \log _{q-1}\left( r\right) \right\} ^{\rho (p,q)+\varepsilon }\right]
+\exp _{p-1}\left[ \left\{ \log _{q-1}\left( r\right) \right\} ^{\underline{%
\rho }_{A_{m}}(p,q)+\varepsilon }\right] \\
&&+\exp _{p-1}\left[ \left\{ \log _{q-1}\left( r\right) \right\} ^{\lambda
\left( \frac{1}{A_{l}}\right) +\varepsilon }\right] +O\left( \exp _{p-1}%
\left[ \left\{ \log _{q-1}\left( r\right) \right\} ^{\rho \left( p,q\right)
+\varepsilon }\right] \right) .
\end{eqnarray*}

Choose sufficiently small $\varepsilon $ so that $0<3\varepsilon <\min
\left\{ \underline{\rho }_{A_{l}}(p,q)-\rho (p,q),\text{ }\underline{\rho }%
_{A_{l}}(p,q)-\lambda \left( \frac{1}{A_{l}}\right) \right\} $and for all $%
\left\vert z\right\vert =r\in S_{5},r\rightarrow \infty $ we have%
\begin{equation*}
\exp _{p-1}\left[ \left\{ \log _{q-1}\left( r\right) \right\} ^{\underline{%
\rho }_{A_{l}}(p,q)-2\varepsilon }\right] \leq \exp _{p-1}\left[ \left\{
\log _{q-1}\left( r\right) \right\} ^{\rho _{f}(p,q)+\varepsilon }\right] ,
\end{equation*}

which implies%
\begin{equation*}
\underline{\rho }_{A_{l}}\left( p,q\right) \leq \rho _{f}\left( p,q\right)
+3\varepsilon .
\end{equation*}

Since $\varepsilon >0$ is arbitrary, therefore $\rho _{f}\left( p,q\right)
\geq \underline{\rho }_{A_{l}}(p,q).$

\textbf{Case 2.} \ Consider $\max \left\{ \rho _{A_{j}}(p,q),j\neq
m,l\right\} =\alpha <\underline{\rho }_{A_{m}}(p,q)=\underline{\rho }%
_{A_{l}}(p,q),$ \underline{$\tau $}$_{A_{l}}(p,q)>$\underline{$\tau $}$%
_{A_{m}}(p,q).$

Now for sufficiently large $r$ and for above $\varepsilon $ by definition we
have%
\begin{eqnarray}
T(r,A_{j}(z)) &\leq &\exp _{p-1}\left[ \left\{ \log _{q-1}\left( r\right)
\right\} ^{\rho _{A_{j}}(p,q)+\varepsilon }\right]  \notag \\
&\leq &\exp _{p-1}\left[ \left\{ \log _{q-1}\left( r\right) \right\}
^{\alpha +\varepsilon }\right] ,j\neq m,l  \label{19}
\end{eqnarray}

and 
\begin{equation}
T(r,A_{l}(z))\geq \exp _{p-1}\left[ \left( \underline{\tau }%
_{A_{l}}(p,q)-\varepsilon \right) \left\{ \log _{q-1}\left( r\right)
\right\} ^{\underline{\rho }_{A_{l}}(p,q)}\right] .  \label{20}
\end{equation}

Again using Lemma $\ref{8}$ by the definition of $\underline{\tau }_{A_{m}},$%
for any $\varepsilon >0,$ there exists a subset $S_{6}\subset \left(
1,+\infty \right) $ of infinite logarithmic measure such that for all $%
\left\vert z\right\vert =r\in S_{6},$ we have%
\begin{eqnarray}
T(r,A_{m}(z)) &\leq &\exp _{p-1}\left[ \left( \underline{\tau }%
_{A_{m}}(p,q)+\varepsilon \right) \left\{ \log _{q-1}\left( r\right)
\right\} ^{\underline{\rho }_{A_{m}}(p,q)}\right]  \notag \\
&=&\exp _{p-1}\left[ \left( \underline{\tau }_{A_{m}}(p,q)+\varepsilon
\right) \left\{ \log _{q-1}\left( r\right) \right\} ^{\underline{\rho }%
_{A_{l}}(p,q)}\right] .  \label{21}
\end{eqnarray}

Hence by substituting $\left( \ref{14}\right) ,\left( \ref{15}\right)
,\left( \ref{17}\right) ,\left( \ref{19}\right) -\left( \ref{21}\right) $ in 
$\left( \ref{13}\right) $, for all $\left\vert z\right\vert =r\in S_{6}$ we
obtain%
\begin{eqnarray*}
&&\exp _{p-1}\left[ \left( \underline{\tau }_{A_{l}}\left( p,q\right)
-\varepsilon \right) \left\{ \log _{q-1}\left( r\right) \right\} ^{%
\underline{\rho }_{A_{l}}(p,q)}\right] \\
&\leq &k\exp _{p-1}\left[ \left\{ \log _{q-1}\left( r\right) \right\} ^{\rho
\left( p,q\right) +\varepsilon }\right] +\exp _{p-1}\left[ \left( \underline{%
\tau }_{A_{m}}+\varepsilon \right) \left\{ \log _{q-1}\left( r\right)
\right\} ^{\underline{\rho }_{A_{l}}(p,q)}\right] \\
&&+\exp _{p-1}\left[ \left\{ \log _{q-1}\left( r\right) \right\} ^{\lambda
\left( \frac{1}{A_{l}}\right) +\varepsilon }\right] +2\exp _{p-1}\left[
\left\{ \log _{q-1}\left( 2r\right) \right\} ^{\rho _{f}\left( p,q\right)
+\varepsilon }\right] \\
&&+O\left( \exp _{p-1}\left[ \left\{ \log _{q-1}\left( r\right) \right\}
^{\rho \left( p,q\right) +\varepsilon }\right] \right) .
\end{eqnarray*}

Choose sufficiently small $\varepsilon $ so that

$0<2\varepsilon <\min \left\{ \underline{\rho }_{A_{l}}\left( p,q\right)
-\alpha ,\text{ }\underline{\tau }_{A_{l}}\left( p,q\right) -\underline{\tau 
}_{A_{m}}\left( p,q\right) ,\underline{\rho }_{A_{l}}\left( p,q\right)
-\lambda \left( \frac{1}{A_{l}}\right) \right\} $ and for all $\left\vert
z\right\vert =r\in S_{6}$ we have%
\begin{equation*}
\exp _{p-1}\left[ (\underline{\tau }_{A_{l}}\left( p,q\right) -\underline{%
\tau }_{A_{m}}\left( p,q\right) -2\varepsilon )\left\{ \log _{q-1}\left(
r\right) \right\} ^{\underline{\rho }_{A_{l}}(p,q)-\varepsilon }\right] \leq
\exp _{p-1}\left[ \left\{ \log _{q-1}\left( r\right) \right\} ^{\rho
_{f}(p,q)+\varepsilon }\right] ,
\end{equation*}

which implies%
\begin{equation*}
\underline{\rho }_{A_{l}}\left( p,q\right) \leq \rho _{f}\left( p,q\right)
+2\varepsilon .
\end{equation*}

Since $\varepsilon >0$ is arbitrary, therefore $\rho _{f}\left( p,q\right)
\geq \underline{\rho }_{A_{l}}\left( p,q\right) .$

If $A_{k+1}=0,$ then by substituting $\left( \ref{14}\right) ,\left( \ref{15}%
\right) ,\left( \ref{17}\right) ,\left( \ref{19}\right) -\left( \ref{21}%
\right) $ in $\left( \ref{13}\right) $, for all $\left\vert z\right\vert
=r\in S_{6}$ we obtain%
\begin{eqnarray*}
&&\exp _{p-1}\left[ \left( \underline{\tau }_{A_{l}}\left( p,q\right)
-\varepsilon \right) \left\{ \log _{q-1}\left( r\right) \right\} ^{%
\underline{\rho }_{A_{l}}(p,q)}\right] \\
&\leq &(k-1)\exp _{p-1}\left[ \left\{ \log _{q-1}\left( r\right) \right\}
^{\rho \left( p,q\right) +\varepsilon }\right] +\exp _{p-1}\left[ \left( 
\underline{\tau }_{A_{m}}\left( p,q\right) +\varepsilon \right) \left\{ \log
_{q-1}\left( r\right) \right\} ^{\underline{\rho }_{A_{l}}(p,q)}\right] \\
&&+\exp _{p-1}\left[ \left\{ \log _{q-1}\left( r\right) \right\} ^{\lambda
\left( \frac{1}{A_{l}}\right) +\varepsilon }\right] +O\left( \exp _{p-1}%
\left[ \left\{ \log _{q-1}\left( r\right) \right\} ^{\rho \left( p,q\right)
+\varepsilon }\right] \right) .
\end{eqnarray*}

Choose sufficiently small $\varepsilon $ so that

$0<2\varepsilon <\min \left\{ \underline{\rho }_{A_{l}}\left( p,q\right)
-\alpha ,\text{ }\underline{\tau }_{A_{l}}\left( p,q\right) -\underline{\tau 
}_{A_{m}}\left( p,q\right) ,\underline{\rho }_{A_{l}}\left( p,q\right)
-\lambda \left( \frac{1}{A_{l}}\right) \right\} $ and for all $\left\vert
z\right\vert =r\in S_{6}$ we have%
\begin{equation*}
\exp _{p-1}\left[ (\underline{\tau }_{A_{l}}\left( p,q\right) -\underline{%
\tau }_{A_{m}}\left( p,q\right) -2\varepsilon )\left\{ \log _{q-1}\left(
r\right) \right\} ^{\underline{\rho }_{A_{l}}(p,q)-\varepsilon }\right] \leq
\exp _{p-1}\left[ \left\{ \log _{q-1}\left( r\right) \right\} ^{\rho
_{f}\left( p,q\right) +\varepsilon }\right] ,
\end{equation*}

which implies%
\begin{equation*}
\underline{\rho }_{A_{l}}\left( p,q\right) \leq \rho _{f}\left( p,q\right)
+2\varepsilon .
\end{equation*}

Since $\varepsilon >0$ is arbitrary, therefore $\rho _{f}\left( p,q\right)
\geq \underline{\rho }_{A_{l}}\left( p,q\right) .$

\textbf{Case 3.} Suppose $\underline{\rho }_{A_{m}}(p,q)<\max \left\{ \rho
_{A_{j}}(p,q),\text{ }j\neq m,l\right\} =\underline{\rho }_{A_{l}}(p,q)$ and 
$\dsum\limits_{\rho _{A_{j}}(p,q)=\underline{\rho }_{A_{l}}(p,q),\text{ }%
j\neq m,l}\tau _{A_{j}}(p,q)=\tau _{1}<$ \underline{$\tau $}$%
_{A_{l}}(p,q)<+\infty .$

Then we have $\rho _{A_{j}}(p,q)=\underline{\rho }_{A_{l}}(p,q)$ with

$\dsum\limits_{\text{ }j\in \left\{ 1,2,...,k+1\right\} \backslash \left\{
m,l\right\} }\tau _{A_{j}}(p,q)=\tau _{1}<$ \underline{$\tau $}$%
_{A_{l}}(p,q)<+\infty $

and $\rho _{A_{j}}(p,q)<\underline{\rho }_{A_{l}}(p,q)$ for $j\in \left\{
1,2,...,k+1\right\} \backslash \left\{ 1,2,...,k+1\right\} \cup \left\{
m,l\right\} .$

Now for sufficiently large $r$ and for above $\varepsilon $ by definition we
have%
\begin{equation}
T(r,A_{j}(z))\leq \exp _{p-1}\left[ \left( \tau _{A_{j}}+\varepsilon \right)
\left\{ \log _{q-1}\left( r\right) \right\} ^{\underline{\rho }%
_{A_{l}}\left( p,q\right) }\right] ,j\in \left\{ 1,2,...,k+1\right\}
\backslash \left\{ m,l\right\}  \label{22}
\end{equation}

and%
\begin{equation}
T(r,A_{j}(z))\leq \exp _{p-1}\left[ \left\{ \log _{q-1}\left( r\right)
\right\} ^{\underline{\rho }_{A_{l}}\left( p,q\right) -\varepsilon }\right] ,%
\text{ }j\in \left\{ 1,2,...,k+1\right\} \backslash \left\{
1,2,...,k+1\right\} \cup \left\{ m,l\right\} .  \label{23}
\end{equation}

Hence by substituting $\left( \ref{14}\right) ,\left( \ref{15}\right)
,\left( \ref{17}\right) ,\left( \ref{18A}\right) ,\left( \ref{20}\right)
,\left( \ref{22}\right) ,\left( \ref{23}\right) $ in $\left( \ref{13}\right)
,$ for all $\left\vert z\right\vert =r\in S_{6}$ we obtain%
\begin{eqnarray*}
&&\exp _{p-1}\left[ \left( \underline{\tau }_{A_{l}}\left( p,q\right)
-\varepsilon \right) \left\{ \log _{q-1}\left( r\right) \right\} ^{%
\underline{\rho }_{A_{l}}\left( p,q\right) }\right] \\
&\leq &\dsum\limits_{\text{ }j\in \left\{ 1,2,...,k+1\right\} \backslash
\left\{ m,l\right\} }\exp _{p-1}\left[ \left( \underline{\tau }%
_{A_{j}}\left( p,q\right) +\varepsilon \right) \left\{ \log _{q-1}\left(
r\right) \right\} ^{\underline{\rho }_{A_{l}}\left( p,q\right) }\right] \\
&&+\dsum\limits_{\substack{ \text{ }j\in \left\{ 1,2,...,k+1\right\}  \\ %
\backslash \left\{ 1,2,...,k+1\right\} \cup \left\{ m,l\right\} }}\exp _{p-1}%
\left[ \left\{ \log _{q-1}\left( r\right) \right\} ^{\underline{\rho }%
_{A_{l}}\left( p,q\right) -\varepsilon }\right] \\
&&+\exp _{p-1}\left[ \left\{ \log _{q-1}\left( r\right) \right\} ^{%
\underline{\rho }_{A_{m}}\left( p,q\right) +\varepsilon }\right] +\exp _{p-1}%
\left[ \left\{ \log _{q-1}\left( r\right) \right\} ^{\lambda \left( \frac{1}{%
A_{l}}\right) +\varepsilon }\right] \\
&&+2\exp _{p-1}\left[ \left\{ \log _{q-1}\left( 2r\right) \right\} ^{\rho
_{f}\left( p,q\right) +\varepsilon }\right] +O\left( \exp _{p-1}\left[
\left\{ \log _{q-1}\left( r\right) \right\} ^{\rho \left( p,q\right)
+\varepsilon }\right] \right) \\
&\leq &\left( \tau _{1}+k\varepsilon \right) \exp _{p-1}\left[ \left\{ \log
_{q-1}\left( r\right) \right\} ^{\underline{\rho }_{A_{l}}\left( p,q\right) }%
\right] +O\left( \exp _{p-1}\left[ \left\{ \log _{q-1}\left( r\right)
\right\} ^{\underline{\rho }_{A_{l}}\left( p,q\right) -\varepsilon }\right]
\right) \\
&&+\exp _{p-1}\left[ \left\{ \log _{q-1}\left( r\right) \right\} ^{%
\underline{\rho }_{A_{m}}\left( p,q\right) +\varepsilon }\right] +\exp _{p-1}%
\left[ \left\{ \log _{q-1}\left( r\right) \right\} ^{\lambda \left( \frac{1}{%
A_{l}}\right) +\varepsilon }\right] \\
&&+2\exp _{p-1}\left[ \left\{ \log _{q-1}\left( 2r\right) \right\} ^{\rho
_{f}\left( p,q\right) +\varepsilon }\right] +O\left( \exp _{p-1}\left[
\left\{ \log _{q-1}\left( r\right) \right\} ^{\rho \left( p,q\right)
+\varepsilon }\right] \right) .
\end{eqnarray*}

Choose sufficiently small $\varepsilon $ so that $0<\varepsilon <\min
\left\{ \frac{\underline{\rho }_{A_{l}}\left( p,q\right) -\underline{\rho }%
_{A_{m}}\left( p,q\right) }{2},\text{ }\frac{\underline{\tau }_{A_{l}}\left(
p,q\right) -\tau _{1}}{k+1},\frac{\underline{\rho }_{A_{l}}\left( p,q\right)
-\lambda \left( \frac{1}{A_{l}}\right) }{2}\right\} $ and for all $%
\left\vert z\right\vert =r\in S_{5}$ we have%
\begin{equation*}
\exp _{p-1}\left[ (\underline{\tau }_{A_{l}}\left( p,q\right) -\tau
_{1}-(k+1)\varepsilon )\left\{ \log _{q-1}\left( r\right) \right\} ^{%
\underline{\rho }_{A_{l}}\left( p,q\right) -\varepsilon }\right] \leq \exp
_{p-1}\left[ \left\{ \log _{q-1}\left( r\right) \right\} ^{\rho _{f}\left(
p,q\right) +\varepsilon }\right] ,
\end{equation*}

which implies%
\begin{equation*}
\underline{\rho }_{A_{l}}\left( p,q\right) \leq \rho _{f}\left( p,q\right)
+2\varepsilon .
\end{equation*}

Since $\varepsilon >0$ is arbitrary, therefore $\rho _{f}\left( p,q\right)
\geq \underline{\rho }_{A_{l}}\left( p,q\right) .$

If $A_{k+1}=0,$ then by substituting $\left( \ref{14}\right) ,\left( \ref{15}%
\right) ,\left( \ref{18A}\right) ,\left( \ref{20}\right) ,\left( \ref{22}%
\right) ,\left( \ref{23}\right) $ in $\left( \ref{13}\right) $, for all $%
\left\vert z\right\vert =r\in S_{5}$ we obtain%
\begin{eqnarray*}
&&\exp _{p-1}\left[ \left( \underline{\tau }_{A_{l}}\left( p,q\right)
-\varepsilon \right) \left\{ \log _{q-1}\left( r\right) \right\} ^{%
\underline{\rho }_{A_{l}}\left( p,q\right) }\right] \\
&\leq &\left( \tau _{1}+(k-1)\varepsilon \right) \exp _{p-1}\left[ \left\{
\log _{q-1}\left( r\right) \right\} ^{\underline{\rho }_{A_{l}}\left(
p,q\right) }\right] +O\left( \exp _{p-1}\left[ \left\{ \log _{q-1}\left(
r\right) \right\} ^{\underline{\rho }_{A_{l}}\left( p,q\right) -\varepsilon }%
\right] \right) \\
&&+\exp _{p-1}\left[ \left\{ \log _{q-1}\left( r\right) \right\} ^{%
\underline{\rho }_{A_{m}}\left( p,q\right) +\varepsilon }\right] +\exp _{p-1}%
\left[ \left\{ \log _{q-1}\left( r\right) \right\} ^{\lambda \left( \frac{1}{%
A_{l}}\right) +\varepsilon }\right] \\
&&+O\left( \exp _{p-1}\left[ \left\{ \log _{q-1}\left( r\right) \right\}
^{\rho \left( p,q\right) +\varepsilon }\right] \right) .
\end{eqnarray*}

Choose sufficiently small $\varepsilon $ so that $0<\varepsilon <\min
\left\{ \frac{\underline{\rho }_{A_{l}}\left( p,q\right) -\underline{\rho }%
_{A_{m}}\left( p,q\right) }{2},\text{ }\frac{\underline{\tau }_{A_{l}}\left(
p,q\right) -\tau _{1}}{k},\frac{\underline{\rho }_{A_{l}}\left( p,q\right)
-\lambda \left( \frac{1}{A_{l}}\right) }{2}\right\} $ and for all $%
\left\vert z\right\vert =r\in S_{5}$ ,$r\rightarrow \infty $ we have%
\begin{equation*}
\exp _{p-1}\left[ (\underline{\tau }_{A_{l}}\left( p,q\right) -\tau
_{1}-k\varepsilon )\left\{ \log _{q-1}\left( r\right) \right\} ^{\underline{%
\rho }_{A_{l}}\left( p,q\right) -\varepsilon }\right] \leq \exp _{p-1}\left[
\left\{ \log _{q-1}\left( r\right) \right\} ^{\rho _{f}\left( p,q\right)
+\varepsilon }\right] ,
\end{equation*}

which implies%
\begin{equation*}
\underline{\rho }_{A_{l}}\left( p,q\right) \leq \rho _{f}\left( p,q\right)
+2\varepsilon .
\end{equation*}

Since $\varepsilon >0$ is arbitrary, therefore $\rho _{f}\left( p,q\right)
\geq \underline{\rho }_{A_{l}}\left( p,q\right) .$

\textbf{Case 4.} Suppose $\max \left\{ \rho _{A_{j}}(p,q),j\neq m,l\right\} =%
\underline{\rho }_{A_{m}}(p,q)=\underline{\rho }_{A_{l}}(p,q)$ and $%
\dsum\limits_{\rho _{A_{j}}(p,q)=\underline{\rho }_{A_{l}}(p,q)>0,\text{ }%
j\neq m,l}\tau _{A_{j}}(p,q)+\underline{\tau }_{A_{m}}(p,q)<$ \underline{$%
\tau $}$_{A_{l}}(p,q)<+\infty .$

Hence by substituting $\left( \ref{14}\right) ,\left( \ref{15}\right)
,\left( \ref{17}\right) ,\left( \ref{20}\right) ,\left( \ref{21}\right)
,\left( \ref{22}\right) ,\left( \ref{23}\right) $ in $\left( \ref{13}\right) 
$, for all $\left\vert z\right\vert =r\in S_{6}$ we obtain%
\begin{eqnarray*}
&&\exp _{p-1}\left[ \left( \underline{\tau }_{A_{l}}\left( p,q\right)
-\varepsilon \right) \left\{ \log _{q-1}\left( r\right) \right\} ^{%
\underline{\rho }_{A_{l}}\left( p,q\right) }\right] \\
&\leq &\dsum\limits_{\text{ }j\in \left\{ 1,2,...,k+1\right\} \backslash
\left\{ m,l\right\} }\exp _{p-1}\left[ \left( \underline{\tau }%
_{A_{j}}\left( p,q\right) +\varepsilon \right) \left\{ \log _{q-1}\left(
r\right) \right\} ^{\underline{\rho }_{A_{l}}\left( p,q\right) }\right] \\
&&+\dsum\limits_{\text{ }_{\substack{ \text{ }j\in \left\{
1,2,...,k+1\right\}  \\ \backslash \left\{ 1,2,...,k+1\right\} \cup \left\{
m,l\right\} }}}\exp _{p-1}\left[ \left\{ \log _{q-1}\left( r\right) \right\}
^{\underline{\rho }_{A_{l}}\left( p,q\right) -\varepsilon }\right] \\
&&+\exp _{p-1}\left[ \left( \underline{\tau }_{A_{m}}\left( p,q\right)
+\varepsilon \right) \left\{ \log _{q-1}\left( r\right) \right\} ^{%
\underline{\rho }_{A_{l}}\left( p,q\right) +\varepsilon }\right] +\exp _{p-1}%
\left[ \left\{ \log _{q-1}\left( r\right) \right\} ^{\lambda \left( \frac{1}{%
A_{l}}\right) +\varepsilon }\right] \\
&&+2\exp _{p-1}\left[ \left\{ \log _{q-1}\left( 2r\right) \right\} ^{\rho
_{f}+\varepsilon }\right] +O\left( \exp _{p-1}\left[ \left\{ \log
_{q-1}\left( r\right) \right\} ^{\rho \left( p,q\right) +\varepsilon }\right]
\right) \\
&\leq &\left( \tau _{1}+\underline{\tau }_{A_{m}}\left( p,q\right)
+(k+1)\varepsilon \right) \exp _{p-1}\left[ \left\{ \log _{q-1}\left(
r\right) \right\} ^{\underline{\rho }_{A_{l}}\left( p,q\right) }\right]
+O\left( \exp _{p-1}\left[ \left\{ \log _{q-1}\left( r\right) \right\} ^{%
\underline{\rho }_{A_{l}}\left( p,q\right) -\varepsilon }\right] \right) \\
&&+\exp _{p-1}\left[ \left\{ \log _{q-1}\left( r\right) \right\} ^{\lambda
\left( \frac{1}{A_{l}}\right) +\varepsilon }\right] +2\exp _{p-1}\left[
\left\{ \log _{q-1}\left( 2r\right) \right\} ^{\rho _{f}\left( p,q\right)
+\varepsilon }\right] \\
&&+O\left( \exp _{p-1}\left[ \left\{ \log _{q-1}\left( r\right) \right\}
^{\rho \left( p,q\right) +\varepsilon }\right] \right) .
\end{eqnarray*}

Choose sufficiently small $\varepsilon $ so that $0<\varepsilon <\min
\left\{ \frac{\underline{\tau }_{A_{l}}\left( p,q\right) -\underline{\tau }%
_{A_{m}}\left( p,q\right) -\tau _{1}}{k+2},\text{ }\frac{\underline{\rho }%
_{A_{l}}\left( p,q\right) -\lambda \left( \frac{1}{A_{l}}\right) }{2}%
\right\} $ and for all $\left\vert z\right\vert =r\in S_{6}$ we have%
\begin{equation*}
\exp _{p-1}\left[ (\underline{\tau }_{A_{l}}\left( p,q\right) -\tau _{1}-%
\underline{\tau }_{A_{m}}\left( p,q\right) -(k+2)\varepsilon )\left\{ \log
_{q-1}\left( r\right) \right\} ^{\underline{\rho }_{A_{l}}\left( p,q\right)
-\varepsilon }\right] \leq \exp _{p-1}\left[ \left\{ \log _{q-1}\left(
r\right) \right\} ^{\rho _{f}+\varepsilon }\right] ,
\end{equation*}

which implies%
\begin{equation*}
\underline{\rho }_{A_{l}}\left( p,q\right) \leq \rho _{f}\left( p,q\right)
+2\varepsilon .
\end{equation*}

Since $\varepsilon >0$ is arbitrary, therefore $\rho _{f}\left( p,q\right)
\geq \underline{\rho }_{A_{l}}\left( p,q\right) .$

If $A_{k+1}=0,$ then by substituting $\left( \ref{14}\right) ,\left( \ref{15}%
\right) ,\left( \ref{20}\right) ,\left( \ref{21}\right) ,\left( \ref{22}%
\right) ,\left( \ref{23}\right) $ in $\left( \ref{13}\right) $, for all $%
\left\vert z\right\vert =r\in S_{6}$ we obtain%
\begin{eqnarray*}
&&\exp _{p-1}\left[ \left( \underline{\tau }_{A_{l}}\left( p,q\right)
-\varepsilon \right) \left\{ \log _{q-1}\left( r\right) \right\} ^{%
\underline{\rho }_{A_{l}}\left( p,q\right) }\right] \\
&\leq &\left( \tau _{1}+\underline{\tau }_{A_{m}}\left( p,q\right)
+k\varepsilon \right) \exp _{p-1}\left[ \left\{ \log _{q-1}\left( r\right)
\right\} ^{\underline{\rho }_{A_{l}}\left( p,q\right) }\right] +O\left( \exp
_{p-1}\left[ \left\{ \log _{q-1}\left( r\right) \right\} ^{\underline{\rho }%
_{A_{l}}\left( p,q\right) -\varepsilon }\right] \right) \\
&&+\exp _{p-1}\left[ \left\{ \log _{q-1}\left( r\right) \right\} ^{\lambda
\left( \frac{1}{A_{l}}\right) +\varepsilon }\right] +O\left( \exp _{p-1}%
\left[ \left\{ \log _{q-1}\left( r\right) \right\} ^{\rho \left( p,q\right)
+\varepsilon }\right] \right) .
\end{eqnarray*}

Choose sufficiently small $\varepsilon $ so that $0<\varepsilon <\min
\left\{ \frac{\underline{\tau }_{A_{l}}\left( p,q\right) -\underline{\tau }%
_{A_{m}}\left( p,q\right) -\tau _{1}}{k+1},\text{ }\frac{\underline{\rho }%
_{A_{l}}\left( p,q\right) -\lambda \left( \frac{1}{A_{l}}\right) }{2}%
\right\} $ and for all $\left\vert z\right\vert =r\in S_{6}$ ,$r\rightarrow
\infty $ we have%
\begin{equation*}
\exp _{p-1}\left[ (\underline{\tau }_{A_{l}}\left( p,q\right) -\tau _{1}-%
\underline{\tau }_{A_{m}}\left( p,q\right) -(k+1)\varepsilon )\left\{ \log
_{q-1}\left( r\right) \right\} ^{\underline{\rho }_{A_{l}}\left( p,q\right)
-\varepsilon }\right] \leq \exp _{p-1}\left[ \left\{ \log _{q-1}\left(
r\right) \right\} ^{\rho _{f}+\varepsilon }\right] ,
\end{equation*}

which implies%
\begin{equation*}
\underline{\rho }_{A_{l}}\left( p,q\right) \leq \rho _{f}\left( p,q\right)
+2\varepsilon .
\end{equation*}

Since $\varepsilon >0$ is arbitrary, therefore $\rho _{f}\left( p,q\right)
\geq \underline{\rho }_{A_{l}}\left( p,q\right) .$
\end{proof}

\end{document}